\theoremstyle{plain}
\newtheorem{theorem}{Theorem}[section]
\newtheorem{corollary}[theorem]{Corollary}
\newtheorem{lemma}[theorem]{Lemma}
\newtheorem{proposition}[theorem]{Proposition}
\newtheorem{fact}[theorem]{Fact}
\newtheorem*{claim}{Claim}
\newtheorem*{theorem*}{Theorem}
\newtheorem{introtheorem}{Theorem}
\theoremstyle{definition}
\newtheorem{definition}[theorem]{Definition}
\newtheorem{example}[theorem]{Example}
\theoremstyle{remark}
\newtheorem*{remark}{Remark}
\numberwithin{equation}{section}
\newcommand{\forkindep}[1][]{%
  \mathrel{
    \mathop{
      \vcenter{
        \hbox{\oalign{\noalign{\kern-.3ex}\hfil$\vert$\hfil\cr
              \noalign{\kern-.7ex}
              $\smile$\cr\noalign{\kern-.3ex}}}
      }
    }\displaylimits_{#1}
  }
}
\newenvironment{claimproof}[1][\proofname]
  {%
    \proof[#1]%
      \renewcommand*\qedsymbol{‌​$\square$ (claim)}%
  }
  {%
    \endproof%
  }
\newcounter{step}                   
    {\hfill $\clubsuit$             
     \vspace{7pt}\par}
\newcommand{\GDp}{{\Gamma/{\Delta_p}}}
\newcommand{\GD}{{\Gamma/{\Delta}}}
\newcommand{\Deltap}{{\Delta_p}}
\title[Field-QE for Strongly Dependent Field]{Eliminating Field Quantifiers in Strongly Dependent Henselian Fields}
\date{\today}
\author[Y. Halevi]{Yatir Halevi$^*$}
\thanks{$^*$The research leading to these results has received funding from the European Research Council under the European Union’s Seventh Framework Programme (FP7/2007-2013)/ERC Grant Agreement No. 291111.}
\address{$^*$Einstein Institute of Mathematics\\
	The Hebrew University of Jerusalem\\
	Givat Ram\\
	Jerusalem 91904\\
	Israel\\}
\email{yatir.halevi@mail.huji.ac.il}
\urladdr{http://ma.huji.ac.il/\textasciitilde yatirh/}
\author[A. Hasson]{Assaf Hasson$^\dagger$}
\thanks{$^\dagger$ Supported by ISF grant No. 181/16}
\address{$^\dagger$Department of mathematics\\
	Ben Gurion University of the Negev\\
	Be'er Sehva\\
	Israel} \email{hassonas@math.bgu.ac.il} \urladdr{http://www.math.bgu.ac.il/\textasciitilde hasson/}
\date{\today}
\begin{document}
	\begin{abstract}
		We prove elimination of field quantifiers for strongly dependent henselian fields in the Denef-Pas language. This is achieved by proving the result for a class of fields generalizing algebraically maximal Kaplansky fields. We deduce that if $(K,v)$ is strongly dependent then so is its henselization.  
	\end{abstract}
	
	\maketitle
	\section{Introduction}
	This paper stemmed from the need for a complete proof that algebraically maximal Kaplansky fields eliminate field quantifiers (in the sense\footnote{See \cite[Appandix A]{silvain} for a more detailed discussion.}, e.g., of \cite[Definition 1.14]{ShDep09}). It quickly became clear that the same methods could be applied to prove elimination of field quantifiers for all strongly dependent henselian fields. 
	
	While elimination of field quantifiers for algebraically maximal Kaplansky fields may be folklore, we could not find a proof in the literature, though several closely related theorems do exist. In \cite[Theorem 2.6]{amc-kuh} Kuhlmann proves that such valued fields admit quantifier elimination relative to a structure he calls an \emph{amc-structure of level $0$}. It is well known that this structure is essentially the RV-structure (see for instance \cite[Section 3.2]{flenner}). In this language, Kuhlmann proves that if $L$ and $F$ are models of a theory of an algebraically maximal Kaplansky field and $K$ is a common substructure then \[RV_L\equiv_{RV_K}RV_F \Longrightarrow (L,v)\equiv_{(K,v)} (F,v),\] where the valued fields are considered, for instance, in the $L_{div}$ language. This is proved by showing that every embedding $RV_L\hookrightarrow RV_F$ (over $RV_K$) lifts to an embedding $(L,v)\hookrightarrow (F,v)$ (over $(K,v)$), provided that $F$ is $|L|^+$-saturated,
	
	Using this result B\'elair proves that, in equi-characteristic $(p,p)$, every algebraically maximal Kaplansky field eliminates field quantifiers in the Denef-Pas language, the 3-sorted language enriched with an angular component map (see \cite[Lemma 4.3]{belair}). It seems, though it is not claimed, that B\'elair's proof may apply to the mixed characteristic case as well.  
	
	Using ideas from \cite[Chapter 3]{johnson}, this result may be extended to strongly dependent henselian valued fields. The main results of this paper are the following
	\begin{introtheorem}\label{T:theorem A}
		Let $(K,v)$ be a henselian valued field, admitting angular component maps, and such that it is either
		\begin{enumerate}
			\item p-valued of rank $d$, 
			\item algebraically maximal Kaplansky or
			\item strongly dependent
		\end{enumerate}
		then $(K,v)$ eliminates field quantifiers in the (generalized) Denef-Pas language.
	\end{introtheorem}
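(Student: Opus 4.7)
The plan is to establish field-quantifier elimination via the standard embedding criterion in the three-sorted Denef-Pas language. I would fix two sufficiently saturated models $(L,v_L)$ and $(F,v_F)$ of the common theory, a common substructure $K_0$, and suppose that the residue field and value group types of $L$ and $F$ over $K_0$ agree (together with their angular components). The aim is to produce a Denef-Pas embedding $L\hookrightarrow F$ over $K_0$; since the angular component map splits the exact sequence $1\to k^\times\to RV\to\Gamma\to 0$, this hypothesis is equivalent to having an RV-embedding $RV_L\hookrightarrow RV_F$ over $RV_{K_0}$, and the whole task reduces to lifting such an RV-embedding to a valued field embedding.

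After replacing $K_0$ by its henselization inside $F$ (which is legitimate because $F$ is henselian and the henselization is unique up to unique isomorphism over $K_0$), the heart of the matter is a Kuhlmann-style lifting lemma: every RV-embedding lifts to a valued field embedding provided the target is sufficiently saturated. For case (2), this is precisely \cite[Theorem 2.6]{amc-kuh}. For case (1), the $p$-valued setting of rank $d$, the analogous lifting is classical, going back to Prestel--Roquette and Macintyre on $p$-adically closed fields. For case (3), following the introduction, I would isolate a class of henselian valued fields generalising the algebraically maximal Kaplansky ones within which the Kuhlmann lifting argument still goes through, and then, using \cite[Chapter 3]{johnson}, verify that every strongly dependent henselian valued field belongs to this class. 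With the lifting lemma available, the elimination follows from a routine iterated back-and-forth on the field sort, extending by one element of $L$ at each stage and using saturation of $F$ to find its image.

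The main obstacle is case (3). Kuhlmann's argument relies crucially on algebraic maximality to exclude non-trivial immediate extensions once residue fields and value groups have been matched. In the strongly dependent setting algebraic maximality need not hold, so one must isolate the correct structural substitute --- some tameness consequence of strong dependence on the shape of immediate extensions, extracted from Johnson's analysis of strongly dependent valued fields --- and then verify both that the modified Kuhlmann lifting argument still works under this weaker hypothesis and that every strongly dependent henselian valued field satisfies it. Axiomatising the right generalised class cleanly enough to accommodate the $p$-valued and Kaplansky cases simultaneously, without sacrificing the immediate-extension control Kuhlmann exploits, is where I would expect the real technical work to lie.
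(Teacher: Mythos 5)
Your outline reproduces the general shape of the paper's argument (an embedding criterion in the Denef--Pas language, with the three cases treated via a Kuhlmann-style lifting), but it stops exactly where the content of the theorem begins. For case (3) you propose to ``isolate a class of henselian valued fields generalising the algebraically maximal Kaplansky ones within which the Kuhlmann lifting argument still goes through'' and you explicitly defer this as ``the real technical work''; that isolation and the verification that the lifting survives it \emph{is} the proof, so as it stands there is a genuine gap. Concretely, the paper introduces the theory $T_1$ (henselian, defectless, perfect base and residue field, \emph{roughly} $p$-divisible value group, residue field with no finite extensions of degree divisible by $p$), checks via the Johnson/Halevi--Hasson characterization that every strongly dependent henselian field with infinite residue field is a model of $T_1$, and then proves the embedding theorem for $T_1$ by: embedding the maximal tame algebraic extension of $K(\mathcal T)^h$ (with ac-compatibility tracked at every step); using the $T_1$-fact that $a\in F^p$ iff $v(a)\in p\cdot vF$ to absorb the $p$-torsion part of the value group; matching relative algebraic closures by an adaptation of Johnson's Lemma 3.2.4, which in mixed characteristic passes through the coarsening at the convex subgroup $\Delta_p$ and ramification theory; and finally handling the remaining immediate extension by Kaplansky's pseudo-Cauchy sequence theorems together with saturation. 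None of these steps, nor any substitute for them, appears in your proposal.

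You also misidentify the obstruction in case (3). Strongly dependent henselian fields are defectless, and henselian defectless fields are algebraically maximal, so ``algebraic maximality need not hold'' is not the problem; what fails is the Kaplansky hypothesis itself: in mixed characteristic the value group is only roughly $p$-divisible, and the residue field may be finite. The finite-residue case cannot be folded into the same lifting argument at all, and it is the reason the statement is in the \emph{generalized} Denef--Pas language: for henselian $p$-valued fields one needs the higher residue rings $\mathcal O/\mathcal M^n$ and the maps $ac_n$, and the paper quotes the relative quantifier-elimination result recorded as Fact \ref{F:T_2-elim-omegaDP} for this. Appealing to ``classical Prestel--Roquette and Macintyre'' does not give case (1) as stated, since those results concern $p$-adically closed fields (and a different language), whereas here the field is merely henselian $p$-valued of rank $d$, so the residue ring data must be carried along relatively. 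Your reduction of the hypothesis to an RV-embedding via the splitting induced by $ac$ is fine in spirit (the paper runs the RV version only in the appendix and works directly with value-group and residue-field embeddings in the main text), but the substantive steps above are missing.
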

	
	Shelah's conjecture (\cite{ShDep09}), usually interpreted as stating that strongly dependent fields which are neither real closed nor algebraically closed are henselian, is our main motivation for carrying out the present research. Strong dependence will, however, be used as a black box, and will never be invoked explicitly. For a more detailed discussion of strongly dependent henselian fields the reader is referred to \cite{HaH2017} and references therein. 
	
	As a consequence of our main result we deduce a transfer principle, providing a new method for constructing strongly dependent fields:
	\begin{introtheorem}\label{T:theorem B}
		Let $(K,v)$ be a strongly dependent valued field. Then its henselization $(K^h,v)$ is also strongly dependent. If, in addition, $(K,v)$ is Kaplansky then also its inertia field $(K^t,v)$ is strongly dependent. 
	\end{introtheorem}
	
	In Section \ref{s:geometric}  we show that any field of finite dp-rank admitting a non-trivial henselian valuation is \emph{geometric} in the sense of \cite{HrP1994}, and that if it also has geometric elimination of imaginaries it is either algebraically closed or real closed.
	
	In Appendix \ref{A:appendix} we show that our proof gives elimination of field quantifiers for strongly dependent henselian fields in the RV-language. 
	
	\section{preliminaries}
	\subsection{Valued Fields}
	We review some terminology and definitions. For a valued field $(K,v)$ let $vK$ denote the value group, $Kv$ the residue field, $res$ the residue map, $\mathcal{O}_K$ (or $\mathcal O$, if the context is clear) the valuation ring and $\mathcal{M}_K$ (or $\mathcal{M}$) its maximal ideal.
	
	Valued fields will be considered in the $3$-sorted language, with sorts for the base field, the value group and the residue field, with the obvious functions and relations. An n-th \emph{angular component map} on a valued field $(K,v)$ is a multiplicative group homomorphism \[ac_n:K^\times \to (K/\mathcal{M}^n)^\times\] such that $ac_n(a)=res_n(a)$ whenever $v(a)=0$, where $res_n:\mathcal{O}\to \mathcal{O}/\mathcal{M}^n$ is the projection on the n-th residue ring, we extend it to $ac_n:K\to K/\mathcal{M}^n$ by setting $ac_n(0)=0$.
	
	\begin{fact}\cite[Corollary 1.6]{pas90}\label{F:exist-ac}
		Every valued field $(K,v)$ has an elementary extension with an n-th angular component map on it.
	\end{fact}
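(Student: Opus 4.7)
The plan is to construct an $n$-th angular component map on a sufficiently saturated (say, $\aleph_1$-saturated) elementary extension $(K^*,v^*)\succeq (K,v)$ by a Zorn's lemma argument.

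First I would reformulate the problem: giving an $\mathrm{ac}_n$ on $K^*$ is equivalent to giving a group-theoretic section
\[\sigma\colon vK^*\to (K^*)^\times/(1+\mathcal{M}_{K^*}^n)\]
of the natural projection induced by $v^*$. Indeed, given any set-theoretic lift $s$ of such $\sigma$, the formula $\mathrm{ac}_n(a):=\mathrm{res}_n(a\cdot s(v^*(a))^{-1})$ (and $\mathrm{ac}_n(0):=0$) is independent of the choice of $s$, multiplicative, and agrees with $\mathrm{res}_n$ on $\mathcal{O}_{K^*}^\times$ (since $\sigma(0)$ is the identity coset, forcing $s(0)\in 1+\mathcal{M}_{K^*}^n$).

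To build $\sigma$, I would apply Zorn's lemma to the partially ordered set of group-theoretic sections defined on subgroups of $vK^*$, ordered by extension. The union of a chain is an upper bound, so the argument reduces to the extension step: given $\sigma$ on $G\leq vK^*$ and $\gamma\in vK^*\setminus G$, extend $\sigma$ to $\langle G,\gamma\rangle$. If $\mathbb{Z}\gamma\cap G=\{0\}$, any lift $b\in (K^*)^\times$ with $v^*(b)=\gamma$ produces a free extension via $\sigma(\gamma):=b\cdot (1+\mathcal{M}_{K^*}^n)$. If $m\gamma=\delta\in G$ for some minimal $m>0$ then, letting $c$ be a lift of $\sigma(\delta)$, we need $b\in (K^*)^\times$ with $v^*(b)=\gamma$ and $b^mc^{-1}\in 1+\mathcal{M}_{K^*}^n$.

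The main obstacle is the torsion case: such a $b$ need not exist in $K$ itself. The idea is that the saturation of $K^*$ allows us to realize the partial type $\{v(x)=\gamma,\ x^mc^{-1}\in 1+\mathcal{M}_{K^*}^n\}$, whose consistency is verified using the structure of finitely generated subgroups of $K^\times/(1+\mathcal{M}_K^n)$. Namely, for any such finitely generated subgroup $H$, the quotient by the unit part embeds into the torsion-free ordered abelian group $vK$ and is hence finitely generated and free, so the corresponding short exact sequence splits on finitely generated pieces. Piecing these local splittings together via Zorn with realizations provided by saturation yields the desired global section $\sigma$, and hence the $n$-th angular component map on $K^*$.
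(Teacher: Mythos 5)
Your reformulation of an $n$-th angular component map on $K^*$ as a group-theoretic section $\sigma\colon vK^*\to (K^*)^\times/(1+\mathcal{M}_{K^*}^n)$ is correct and is indeed the standard reduction. The gap is in the torsion step of your Zorn argument. The "partial type" $\{v(x)=\gamma,\ x^mc^{-1}\in 1+\mathcal{M}_{K^*}^n\}$ is a single formula with parameters in $K^*$ itself, so $\aleph_1$-saturation of $K^*$ buys you nothing: such a formula is realized in $K^*$ if and only if it is satisfiable there at all, and whether it is depends on the choices of $\sigma$ made earlier in the chain. A maximal partial section need not be total. Concretely, take $n=1$ and $K^*$ an $\aleph_1$-saturated elementary extension of the Hahn field $\mathbb{R}((t^{\mathbb{Z}[1/2]}))$, and define $\sigma$ on $\mathbb{Z}\,v(t)$ by $\sigma(v(t))=-t\,(1+\mathcal{M})$. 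Any $b$ with $v(b)=\tfrac12 v(t)$ and $b^2(-t)^{-1}\in 1+\mathcal{M}$ would force $-1$ to be a square in the residue field, which is real closed; this obstruction is first order in the parameter $t$, hence persists in $K^*$. So this partial section does not extend to $\tfrac12 v(t)$, even though a global section exists (e.g.\ $\gamma\mapsto t^\gamma$ on the Hahn field). Your remark that the sequence splits over finitely generated subgroups of the value group is true but does not help: the whole difficulty is that these local splittings need not glue, and greedy extension can paint itself into a corner.

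What actually makes the cited result (Pas) work is pure-injectivity, which is exactly the device that lets one avoid (or repair) bad partial choices. The subgroup $(\mathcal{O}/\mathcal{M}^n)^\times=\mathcal{O}_{K^*}^\times/(1+\mathcal{M}_{K^*}^n)$ is \emph{pure} in $K^{*\times}/(1+\mathcal{M}_{K^*}^n)$ (if an element of value $0$ is an $m$-th power there, its $m$-th root already has value $0$, since $vK^*$ is torsion free), and it is interpretable in $(K^*,v)$, hence $\aleph_1$-saturated as an abelian group, hence algebraically compact, i.e.\ pure-injective, i.e.\ a direct summand of any abelian group containing it purely. Therefore the exact sequence $1\to(\mathcal{O}/\mathcal{M}^n)^\times\to K^{*\times}/(1+\mathcal{M}_{K^*}^n)\to vK^*\to 0$ splits, and your (correct) translation then produces $ac_n$; in fact the same argument applied to $\mathcal{O}_{K^*}^\times\le K^{*\times}$ gives a cross-section of $v$ itself and hence compatible $ac_n$ for all $n$ simultaneously. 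Note that the paper offers no proof of this fact: it is quoted from Pas, with only the remark that $\aleph_1$-saturation suffices; the missing ingredient in your write-up is precisely this algebraic compactness step.
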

	In fact, to obtain an n-th angular component $\aleph_1$-saturation of the field suffices. An ac-valued field is a valued field equipped with an angular component map (i.e. $ac_1$). An ac$_\omega$-valued field is a valued field equipped with an n-th angular component map for every $n$ such that $res_{m,n}\circ ac_n=ac_m$ for $n\geq m$, where $res_{m,n}: \mathcal{O}/\mathcal{M}^n\to \mathcal{O}/\mathcal{M}^m$ is the natural projection. We define two languages:  
	
	The $3$-sorted language of valued fields augmented  by a function symbol \emph{ac} for the angular component map, is called the Denef-Pas language. 
	
	The $3$-sorted language of valued fields augmented by new sorts for the different n-th residue rings, with the natural projections, the different n-th angular component maps and a compatible system of constants will be called the generalized Denef-Pas language. The structures we will be dealing with in this language will have discrete value group, and the constants will be interpreted as images in $\mathcal{O}/\mathcal{M}^n$ of an element of minimal valuation, see \cite[Section 4]{raf} or \cite{raf2} for more information.
	
	A rough characterization of strongly dependent henselian fields was given in \cite[Theorem 4.3.1]{johnson} and a little more explicitly in \cite[Theorem 5.14]{HaH2017}. We make it explicit here, but we first remind some necessary definitions: 
	
	\begin{definition}
		\begin{enumerate}
			\item A valued field $(K,v)$ of residue characteristic $p>0$ is a \emph{Kaplansky field} if the value group is $p$-divisible, the residue field is perfect and does not admit any finite separable extensions of degree divisible by $p$.
			\item A valued field $(K,v)$ of residue characteristic $0$ is Kaplansky. 
			\item $(K,v)$ is \emph{algebraically maximal} if if does not admit any immediate algebraic extension.
			\item $(K,v)$ is \emph{$p$-valued of $p$-rank $d$} if
			\begin{list}{•}{}
				\item $char(Kv)=p>0$ and $char(K)=0$ and
				\item $\dim_{\mathbb{F}_p} \mathcal{O}_K/(p)=d$.
			\end{list}
		\end{enumerate}

	\end{definition}

	Let $(K,v,\Gamma, k)$ be a valued field with value group $\Gamma$, residue field $k$, valuation ring $\mathcal{O}$ and maximal ideal $\mathcal{M}$. Given a convex subgroup $\Delta\le \Gamma$, we let $v^{\Gamma/\Delta}:K\to \Gamma/\Delta$ denote the coarsening of $v$ with valuation ring $\mathcal{O}^{\Gamma/\Delta}=\{x\in K: \exists \delta\in \Delta,\, v(x)>\delta\}$, maximal ideal $\mathcal{M}^{\Gamma/\Delta}=\{x\in K:v(x)>\Delta\}$ and residue field $k^{\Gamma/\Delta}$. 
	
	Setting  $K_1:=k^{\Gamma/\Delta}$
	we let  $v^\Delta:K_1\to \Delta$ denote the valuation given by \[v^\Delta (a+\mathcal{M}^{\GD})=
	\begin{cases}
	v(a) & \text{if } a\in\mathcal{O}^\GD\setminus \mathcal{M}^\GD \\
	\infty & \text{otherwise.}
	\end{cases}\]
	It has valuation ring $\mathcal{O}^\Delta=\{a+\mathcal{M}^\GD: v(a)\geq 0\}$ with maximal ideal $\mathcal{M}^\Delta=\{a+\mathcal{M}^\GD:v(a)>0\}$ and residue field $k^{\Delta}$. 
	It is well known (and easy to check) that: 
	
	\begin{fact}
		The map $k\to k^\Delta$ given by $a+\mathcal{M}\mapsto (a+\mathcal{M}^\GD)+\mathcal{M}^\Delta$ is a field isomorphism.
	\end{fact}
	%
	%
	%
	%
	%
	
	If $(K,v)$ is a valued field of mixed characteristic $(0,p)$ the \emph{core field} of $K$ is the valued field $(K_1,v^{\Delta_p})$, where $\Delta_p$ is the minimal convex subgroup containing $v(p)$. It is also a valued field of mixed characteristic and if $(K,v)$ is henselian then so is the core field. Notice that $(K,v^{\Gamma/\Delta_p})$ is of equi-characteristic $(0,0)$.
	
	\begin{fact}\cite[Theorem 4.3.1]{johnson}\cite[Theorem 5.13]{HaH2017}\label{F:char-strongdep}
		If $(K,v)$ is a henselian valued field with $K$ strongly dependent then
		\begin{list}{•}{}
			\item if $(K,v)$ is of equi-characteristic $(p,p)$ then $(K,v)$ is an algebraically maximal Kaplansky field,
			\item if $(K,v)$ is of mixed characteristic $(0,p)$ then 
			\begin{enumerate}
				\item if $Kv$ is infinite then the core field of $(K,v)$ is algebraically maximal Kaplansky,
				\item if $Kv$ is finite then $(K,v)$ is a p-valued field (the core field is a p-adically closed field).
			\end{enumerate}
		\end{list}
	\end{fact}
	
	\subsection{Two theories of valued fields}\label{ss:2theories}
	Although our main goal is to show elimination of field quantifiers in the (generalized) Denef-Pas language for strongly dependent henselain fields, we actually show a bit more. We show it for the two theories given below, whose union encompasses (by Fact \ref{F:char-strongdep}) all strongly dependent fields. The first theory we consider is a generalization of a theory given in \cite[Section 3.2]{johnson}. The second theory is that of p-valued fields.
	
	\subsubsection{Strongly dependent henselian fields with an infinite residue field}
	We borrow the following terminology from \cite{johnson}: 
	
	\begin{definition}
		A valuation $v:K\to \Gamma$ is \emph{roughly $p$-divisible} if $[-v(p),v(p)]\subseteq p\Gamma$, where
		\[[-v(p),v(p)]=\begin{cases}
		\{0\} & \text{in pure characteristic 0}\\
		\Gamma & \text{in pure characteristic p}\\
		[-v(p),v(p)] & \text{in mixed characteristic}
		\end{cases}.\]
	\end{definition}
	
	\begin{remark}
		In the mixed characteristic case, if $vK$ is roughly $p$-divisible and $\Deltap$ is the minimal convex subgroup containing $v(p)$, the coarsening $(K,v^\GDp)$ is of equi-characteristic $0$ and $v^\Deltap$, the induced valuation  $Kv^\GDp$ then $(Kv^\GDp,v^\Deltap)$, is of mixed characteristic $(0,p)$ with a $p$-divisible value group.
	\end{remark}

	\begin{definition}
		Let $T_1$ be the theory of valued fields stating:
		\begin{list}{•}{}
			\item the valued field is henselian and defectless,
			\item the base field and the residue field are perfect,
			\item the valuation is roughly $p$-divisible,
			\item every finite field extension of the residue field has degree prime to $p$.
		\end{list}
	\end{definition}
	
	\begin{remark}
		\begin{enumerate}
			\item Perfection of the residue field follows, in fact, from the requirement that every finite field extension of the residue field has degree prime to $p$.
			\item Perfection of the base fields also follows from the other axioms (see below). 
			\item Every algebraically maximal Kaplansky field is a model of $T_1$.
			\item If $(K,v)\models T_1$ and $vK$ is not $p$-divisible then, by perfection of $K$ (or rough $p$-divisibility), it is necessarily of mixed characteristic.
		\end{enumerate}
	\end{remark}
	
	\begin{lemma}\label{L:infinite KV}
		Let $(K,v)$ be a strongly dependent henselian field. If $Kv$ is infinite then $(K,v)\models T_1$. 
	\end{lemma}
	\begin{proof}
		By Fact \ref{F:char-strongdep}, if $\mathrm{char}(K)=\mathrm{char}Kv$ then $(K,v)$ is algebraically maximal Kaplansky, and the lemma follows from the above remark. So we are reduced to the case where $(K,v)$ is of mixed characteristic $(0,p)$. By \cite[Corollary 5.15]{HaH2017} $(K,v)$ is defectless. By Fact \ref{F:char-strongdep}, since $Kv$ is infinite, the core field of $(K,v)$ is algebraically maximal Kaplansky. In particular, the convex sub-group $\Delta_p\le vK$ generated by $v(p)$ is $p$-divisible, so $(K,v)$ is roughly $p$-divisible. Since $Kv$ is strongly dependent (e.g., \cite[Proposition 5.2]{HaH2017}) it has no finite extensions of degree divisible by $p$ (\cite{KaScWa}). 
	\end{proof}
	
	We collect a few results, essentially, due to Johnson  (\cite[Section 3.2]{johnson}). Johnson states these results under the stronger assumption that the residue field is algebraically closed.  We repeat the proofs, sometimes verbatim, only to emphasize that this requirement is inessential. We start with an immediate application of henselianity:
	
	\begin{fact}\cite[Remark 3.2.2]{johnson}\label{F:sep-closure}
		Let $(L,v)/(K,v)$ be an extension of valued fields. Suppose $(L,v)$ is henselian and $K$ is relatively separably closed in $L$. Then $Kv$ is relatively separably closed in $Lv$.
	\end{fact}
	
	
	The main properties of models of $T_1$ are collected in the next proposition: 
	
	\begin{proposition}\label{P:has-p-roots-and-more}\cite[Proposition 3.2.3]{johnson}
		Let $(F,v)\models T_1$,  of residue characteristic $p$. Then
		\begin{enumerate}
			\item If $vF$ is $p$-divisible then any finite field extension of $F$ has degree prime to $p$,
			\item $a\in F^p$ if and only if $v(a)\in p\cdot vF$,
			\item If $K$ is relatively algebraically closed in $F$, then $K\models T_1$.
		\end{enumerate}
	\end{proposition}
	\begin{proof}
		\begin{enumerate}
			\item Let $L/F$ be a finite extension. Since $F$ is henselian and defectless \[[L:F]=(vL:vF)[Lv:Fv],\]
			but $(vL:vF)$ is prime to $p$ since $vF$ is $p$-divisible and $[Lv:Fv]$ is prime to $p$ by assumption.
			\item In case $vF$ is $p$-divisible if $a\in F\setminus F^p$ then the polynomial $x^p-a$ is irreducible, contradicting $(1)$. 
			
			So we now assume that $vF$ is not $p$-divisible.  Hence, for $\Deltap$, the convex subgroup generated by $v(p)$, we get that $(Fv^\GDp,v^\Deltap)\models T_1$ and has a $p$-divisible value group. Fix some $a\in F$ such that $v(a)\in p\cdot vF$. By considering $a/b^p$ for $v(a)=pv(b)$, we reduce to the case where $v(a)=0$. Because $(F,v^\GDp)$ is henselian of residue characteristic $0$, and $v^\GDp(a)=0$ we know that $a\in F^p$ if and only if $res^\GDp(a)\in (Fv^\GDp)^p$. Since $\Gamma/\Deltap$ is $p$-divisible, by the previous paragraph $Fv^\GDp=(Fv^\GDp)^p$, with the desired conclusion. 
			
			\item We first assume that $F$ has a $p$-divisible value group and show that $(K,v)\models T_1$ and $vK$ is $p$-divisible. Since $F$ is henselian and perfect and $K$ is algebraically closed in $F$, also $K$ is henselian and perfect. So $F/K$ is regular, implying that $F$ and $K^{\text{alg}}$ are linearly disjoint over $K$. Thus $p$ does not divide the degree of any finite extension of $K$. Indeed, if $K(a)/K$ is a finite extension with degree divisible by $p$ then by linear disjointness so is $F(a)/F$. It follows that $(K,v)$ is defectless, $Kv$ is perfect and $vK$ is $p$-divisible. Since every finite extension of $Kv$ may be lifted to a finite extension $K$, $p$ does not divide the degree of any finite extension of $Kv$. 
			
			Assume now that $vF$ is not $p$-divisible. Let $\Deltap, v^\GDp$ and $v^\Deltap$ be as before. By Fact \ref{F:sep-closure}, $Kv^\GDp$ is relatively algebraically closed in $Fv^\GDp$. As $(Fv^\GDp,v^\Deltap)$ is a model of $T_1$ with $p$-divisible value group by what we have done above so is $(Kv^\GDp,v^\Deltap)$. Thus the place $K\to Kv$ decomposes into $K\to Kv^\GDp\to Kv$ each of them henselian, defectless and roughly $p$-divisible. Thus so is $K\to Kv$, i.e., $(K,v)\models T_1$.
		\end{enumerate}
	\end{proof}
	
	\begin{fact}\cite[Lemma 3.12]{amc-kuh}\label{F:final-extension}
		Let $L$ and $F$ be two algebraically maximal Kaplansky fields (and hence defectless) and $K$ a common henselian subfield. Assume that both $vL/vK$ and $vF/vK$ are $p$-torsion groups and both $Lv/Kv$ and $Fv/Kv$ are purely inseparable algebraic extensions. Then the relative algebraic closures of $K$ in $L$ and $F$ are isomorphic.
	\end{fact}
	
	In order to use the above fact in our setting we will need a result from ramification theory, see \cite[Section 5.2]{EnPr} for notation.
	\begin{lemma}\label{L:EnPr-Ramific}
		Let $(K,v)$ be a henselian valued field of residue characteristic $0$ and $(L,v)$ and $(F,v)$ two algebraic extensions with $vL=vK=vF$. If $Fv$ and $Lv$ are isomorphic over $Kv$ (as fields) then $F$ and $L$ are isomorphic over $K$ (as fields\footnote{Indeed, even as valued fields.}).
	\end{lemma}
	\begin{proof}
		Since $(K,v)$ is of residue characteristic $0$ and henselian, $Kv$ is perfect and $(K,v)$ is defectless. Since $vL=vK=vF$, by \cite[Theorem 5.2.9(1)]{EnPr}, $L,F\subseteq K^t$, the inertia field of $K^{\text{alg}}/K$. The result now follows from \cite[Theorem 5.2.7(2)]{EnPr}.
	\end{proof}
	
	The following is an adaptation of \cite[Lemma 3.2.4]{johnson}.
	\begin{lemma}\label{L:adap-johnson}
		Let $L,F\models T_1$ and $K$ a common valued subfield. Assume that $vL=vK=vF$ and that both $Lv/Kv$ and  $Fv/Kv$ are purely inseparable algebraic extensions. Then the relative algebraic closures of $K$ in $L$ and $F$ are isomorphic.
	\end{lemma}
	\begin{proof}
		We may replace $K$ with the perfection of its henselization, thus it is enough to show that the relative algebraic closures are isomorphic as fields over $K$.
		
		If $vK$ is $p$-divisible, then $L$ and $F$ are algebraically maximal Kaplansky fields and result follows from Fact \ref{F:final-extension}.
		Otherwise, $K$, $L$ and $F$ have characteristic $0$. By Proposition \ref{P:has-p-roots-and-more}(3) the respective relative algebraic closures of $K$ in $L$ and $F$ are also models of $T_1$. Denote them by $K^L$ and $K^F$, respectively.
		
		Let $\Deltap, v^\GDp$ and $v^\Deltap$ be as before. Since $vL=vK^L=vK=vK^F=vF$ we also get $v^\GDp L=v^\GDp K^L=v^\GDp K=v^\GDp K^L=v^\GDp F$. As $(K,v^\GDp)$ is a henselian field with residue characteristic $0$ we may use Lemma \ref{L:EnPr-Ramific}, and thus $K^L$ and $K^F$ are isomorphic as fields as long as $K^Lv^\GDp$ and $K^Fv^\GDp$ are isomorphic extension of $Kv^\GDp$. By Fact \ref{F:sep-closure} and Proposition \ref{P:has-p-roots-and-more}(3) we may apply the $p$-divisible case on $Lv^\GDp$ and $Fv^\GDp$, implying that they are, indeed, isomorphic.
	\end{proof}
	
	\subsubsection{The finite residue field case}
	Let $T_2$ be the theory of henselian $p$-valued fields of $p$-rank $d$. We consider it in the language of valued fields augmented by $d$ constants.  Notice that once we named constants, saying that they form an $\mathbb{F}_p$-basis for $\mathcal{O}_K/(p)$ is  a universal sentence. Hence every substructure of a model of $T_2$ is again a $p$-valued field of $p$-rank $d$.

	\begin{fact}\cite[Theorem 4.2]{raf}\label{F:T_2-elim-omegaDP}
	Every model of $T_2$ eliminates field quantifiers in the generalized Denef-Pas language.
	\end{fact}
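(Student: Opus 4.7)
The plan is to reduce the statement to the classical quantifier-elimination theorem for $p$-adically closed fields of $p$-rank $d$ (Macintyre, Prestel--Roquette), and then transport that result into the generalized Denef-Pas language. A key preliminary observation is that in any model of $T_2$ the value group is a $\mathbb{Z}$-group generated, over $v(p)$, by $v(\pi)$ for a uniformiser $\pi$, and each $n$-th residue ring $\mathcal{O}/\mathcal{M}^n$ is a finite ring of cardinality $p^{dn}$ whose isomorphism type over the parameters is pinned down by the named $\mathbb{F}_p$-basis constants. Hence every formula involving only the non-field sorts is already essentially quantifier-free (equivalent to a Boolean combination of Presburger formulas on the value group and of atomic statements about the finite residue rings), and the burden is only to eliminate quantifiers ranging over the field sort.

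The next step is to identify Macintyre's $n$-th power predicate $P_n$ with the formula ``$v(x)\in n\cdot vK$ and $ac_N(x)$ is an $n$-th power in $\mathcal{O}/\mathcal{M}^N$'' for $N$ chosen large compared to $n$ and $v(n)$. Henselianity makes this translation faithful: by Hensel's lemma an element whose value lies in $n\cdot vK$ is an $n$-th power in $K$ iff a sufficiently fine residue of its unit part is an $n$-th power in $\mathcal{O}/\mathcal{M}^N$. Crucially this definition uses no field quantifiers, so Macintyre's quantifier-free formulas translate into field-quantifier-free formulas of the generalized Denef-Pas language. Combined with the classical QE this yields the elimination directly: given a formula $\phi(x,y,z)$ with $x$ field-sorted, apply Macintyre's QE to produce an equivalent quantifier-free formula in the $P_n$-language, and then apply the translation to rewrite it without field quantifiers. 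Equivalently, one may proceed via a back-and-forth: over sufficiently saturated models $L,F\models T_2$ with matching non-field data on a common substructure $K$, any $a\in L$ is matched by $b\in F$ with the same value and the same compatible sequence of $ac_n(a)$'s, and henselianity together with the Macintyre QE forces $K(a)^h\cong K(b)^h$ as Denef-Pas structures.

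The main obstacle is verifying the coherence of the compatible system of constants together with the angular component maps under the translation, since Macintyre's $P_n$ requires a residue ring $\mathcal{O}/\mathcal{M}^N$ large enough to witness $n$-th powers but definable by a single formula whose parameter $N$ depends only on $n$ and the ramification data encoded by the constants. The compatibility relations $res_{m,N}\circ ac_N=ac_m$ and the interpretation of the constants as images of a minimal-valuation element guarantee that the choice of $N$ is uniform across the theory, so that the translation is definable in $T_2$ without additional parameters. Once these technicalities are dispatched the conclusion follows immediately from the Macintyre--Prestel--Roquette theorem for $p$-adically closed fields of $p$-rank $d$.
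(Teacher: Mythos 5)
There is a genuine gap: you conflate $T_2$ --- the theory of henselian $p$-valued fields of $p$-rank $d$ --- with the theory of $p$-adically closed fields of $p$-rank $d$. The Macintyre and Prestel--Roquette quantifier elimination you invoke holds only for the latter, i.e.\ only when the value group is a $\mathbb{Z}$-group. A model of $T_2$ need not be $p$-adically closed: take $\mathbb{Q}_p((t))$ with the composition of the $t$-adic valuation and the $p$-adic valuation on its residue field $\mathbb{Q}_p$; this is a henselian $p$-valued field of $p$-rank $1$, but its value group is $\mathbb{Z}\times\mathbb{Z}$ ordered lexicographically, which is not a $\mathbb{Z}$-group. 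For such a model your ``key preliminary observation'' fails (the value group is not generated over $v(p)$ by the value of a uniformiser, and value-group formulas are not Presburger), and the step ``apply Macintyre's QE to produce an equivalent quantifier-free formula in the $P_n$-language'' is unavailable, since that theorem concerns the complete theory of $p$-adically closed fields and is false here. The point of the Fact is precisely a \emph{relative} elimination: only field-sorted quantifiers are removed, while quantifiers over the (arbitrary, discretely ordered) value group and over the residue rings remain; it cannot be deduced from a full quantifier elimination theorem for one particular completion.

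The back-and-forth you sketch is also too quick at the crucial point: matching $v(a)$ and the compatible system of $ac_n(a)$'s for a single element does not determine the isomorphism type of $K(a)^h$ over $K$; the real work lies in the immediate extensions and requires pseudo-Cauchy sequence arguments, exactly as in the paper's proof of Theorem \ref{P:type-A} for $T_1$. Note also that the paper does not prove this statement itself: it quotes it as \cite[Theorem 4.2]{raf}, whose proof is an embedding/relative-QE argument in the generalized Denef--Pas language. A workable version of your idea would first coarsen $v$ by the convex subgroup $\Delta_p$ generated by $v(p)$, so that the core field is $p$-adically closed (Fact \ref{F:core-field-p-adically-closed}) and Macintyre--Prestel--Roquette applies to it, and then combine this with field-quantifier elimination for the equicharacteristic-zero coarsening; as written, however, the reduction does not go through. (A minor further slip: $|\mathcal{O}/\mathcal{M}^n|=p^{fn}$ with $f=\dim_{\mathbb{F}_p}Kv$, not $p^{dn}$, unless the ramification index is $1$.)
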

		
	We review some facts concerning $p$-valued fields from \cite{PR84}.
	
	\begin{fact}\cite[Section 2.1]{PR84}
		$Kv$ and $[0,v(p)]$ are finite and \[d=\dim_{\mathbb{F}_{p}}Kv\cdot (|[0,v(p)]|-1).\]
	\end{fact}
	
	A \emph{$p$-adically closed field} is a $p$-valued field of $p$-rank d which does not admit any proper algebraic extension of the same $p$-rank. This is a first order property by \cite[Theorem 3.1]{PR84}.

	\begin{fact}\cite[Section 2.2, Theorem 3.1]{PR84}\label{F:core-field-p-adically-closed}
	For every henselian $p$-valued field $(K,v)$, the core field $(K_1,v^{\Delta_p})$ is a $p$-adically closed valued field of the same $p$-rank as $(K,v)$. 
	\end{fact}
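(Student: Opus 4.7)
The plan is to verify three items: $(K_1, v^{\Delta_p})$ is henselian, it is $p$-valued of $p$-rank $d$, and it admits no proper algebraic extension of the same $p$-rank. The first two are essentially bookkeeping, while the last is the substantive content of the statement.

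For the bookkeeping: since $v(p)\in \Delta_p$ the coarsening $v^{\GDp}$ satisfies $v^{\GDp}(p)=0$, so $(K,v^{\GDp})$ is equi-characteristic $0$ and $K_1$ has characteristic $0$. The induced valuation $v^{\Delta_p}$ on $K_1$ has value group $\Delta_p$ and, by the preliminary fact recalling $k\cong k^{\Delta}$, residue field canonically isomorphic to $Kv$; both $v^{\GDp}$ on $K$ and $v^{\Delta_p}$ on $K_1$ are henselian, by standard facts about coarsenings and residue valuations of henselian fields. Because $\Delta_p$ is convex in $vK$ and contains $v(p)$, the finite set $[0,v(p)]$ is the same subset of $\Delta_p$ as of $vK$, and combined with $K_1v^{\Delta_p}\cong Kv$ the product formula $d=\dim_{\mathbb{F}_p}(Kv)\cdot (|[0,v(p)]|-1)$ gives $p$-rank $d$ for $(K_1,v^{\Delta_p})$.

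The main step is to rule out proper algebraic extensions of the same $p$-rank. The key observation is that $\Delta_p$ is isomorphic to $\mathbb{Z}$ as an ordered abelian group: by definition $\Delta_p$ is the convex hull of $\mathbb{Z}\cdot v(p)$, and convexity together with finiteness of $[0,v(p)]$ forces every bounded interval in $\Delta_p$ to be finite, so $\Delta_p$ is discretely ordered with a least positive element $\delta$ satisfying $v(p)=e\delta$ for $e=|[0,v(p)]|-1$, and $\Delta_p=\mathbb{Z}\delta$. With a $\mathbb{Z}$-group value group, finite residue field $Kv$, and henselianity, $(K_1,v^{\Delta_p})$ is defectless, and so the fundamental equality $[L:K_1]=e(L/K_1)\cdot f(L/K_1)$ holds for every finite extension $L/K_1$. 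A direct computation using $|[0,v_L(p)]|=1+e(L/K_1)(|[0,v(p)]|-1)$ and $\dim_{\mathbb{F}_p}(Lv_L)=f(L/K_1)\cdot \dim_{\mathbb{F}_p}(Kv)$ then yields that the $p$-rank of $L$ equals $[L:K_1]\cdot d$, strictly larger than $d$ for any proper extension.

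The main obstacle I expect is organizing this final ramification computation cleanly and justifying defectlessness at the right level of generality; once the $\mathbb{Z}$-group structure of $\Delta_p$ is in hand, each individual identity is standard, but the bookkeeping must be carried out carefully to ensure the $p$-rank formula indeed scales by $[L:K_1]$.
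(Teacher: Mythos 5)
The paper offers no proof of this statement: it is imported verbatim from Prestel--Roquette, where Section 2.2 provides the bookkeeping about the core field and Theorem 3.1 provides the characterization that a $p$-valued field is $p$-adically closed if and only if it is henselian and its value group is a $\mathbb{Z}$-group. Your argument is a correct, self-contained reconstruction of that route in the special case at hand: your first two steps (henselianity of $v^\GDp$ and of $v^\Deltap$, $K_1v^\Deltap\cong Kv$, characteristic $0$, preservation of the $p$-rank) are exactly the Section 2.2 material, and your identification $\Deltap\cong\mathbb{Z}$ together with the ramification count $[L:K_1]=ef$ and the multiplicativity of the $p$-rank re-proves the substantive direction of Theorem 3.1 when the value group is literally $\mathbb{Z}$ rather than merely a $\mathbb{Z}$-group. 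What your route buys is independence from the quoted characterization; what it costs is the one input you flag yourself, the defectlessness of $(K_1,v^\Deltap)$. That claim is true and classical (finiteness of the residue field is irrelevant): since $\mathrm{char}(K_1)=0$ and the valuation has rank $1$, Krasner's lemma shows $K_1$ is relatively algebraically closed in its completion $\widehat{K_1}$, hence $\widehat{K_1}/K_1$ is regular, so any finite $L/K_1$ satisfies $[L\widehat{K_1}:\widehat{K_1}]=[L:K_1]$ while $e$ and $f$ are unchanged under completion; as $\widehat{K_1}$ is a complete discretely valued field it is defectless, and the fundamental equality descends to $K_1$. With that reference or argument supplied, your computation that any proper finite extension has $p$-rank $[L:K_1]\cdot d>d$ (and, by monotonicity of the $p$-rank, so does any proper algebraic extension) closes the proof. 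Two small streamlinings: the equality of $p$-ranks can be seen directly from $\mathcal{M}^\GDp\subseteq p\mathcal{O}_K$, which gives $\mathcal{O}_{K_1}/(p)\cong\mathcal{O}_K/(p)$ without invoking the product formula; and in the definition of $p$-adically closed one should note that the unique extension of the valuation to a finite extension is the relevant one, which is automatic by henselianity.
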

	
	The following is well known.
	\begin{fact}\label{F:p-adically-elem-fiQp}
	Every $p$-adically closed field is elementary equivalent to a finite extension of the $p$-adics $\mathbb{Q}_p$.
	\end{fact}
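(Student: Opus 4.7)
The plan is to extract from the given $p$-adically closed field $K$ two natural numerical invariants, build a finite extension $L$ of $\mathbb{Q}_p$ realising the same invariants, and then invoke a classical completeness theorem to conclude $K \equiv L$.

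The two invariants are the ramification index $e = |[0,v(p)]| - 1$ and the residue degree $f = [Kv:\mathbb{F}_p]$; by the preceding fact they satisfy $ef = d$. To construct $L$, I would first take the unique unramified extension $\mathbb{Q}_{p^f}/\mathbb{Q}_p$ of degree $f$ (e.g.\ by adjoining a primitive $(p^f-1)$-th root of unity), then pick an irreducible Eisenstein polynomial of degree $e$ over $\mathbb{Q}_{p^f}$ and adjoin a root $\pi$, producing $L = \mathbb{Q}_{p^f}(\pi)$. A routine computation shows that $L$ has ramification index $e$ and residue field $\mathbb{F}_{p^f}$, hence $p$-rank $ef = d$; being complete (so henselian) and admitting no proper algebraic extension of the same $p$-rank, $L$ is a $p$-adically closed field whose pair of invariants matches that of $K$.

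The remaining task, and the step I expect to be the main obstacle, is to establish $K \equiv L$. This is the completeness, for each fixed pair $(e,f)$, of the theory of $p$-adically closed fields with ramification $e$ and residue degree $f$---an Ax--Kochen--Ershov style statement originally due to Macintyre for $e = f = 1$ and extended to the general case by Prestel--Roquette in their monograph on formally $p$-adic fields. Once that completeness theorem is available, and since $K$ and $L$ are both models of the same theory $p\mathrm{CF}_{e,f}$, the elementary equivalence $K \equiv L$ follows immediately; everything outside of that theorem is bookkeeping of the pair $(e,f)$. If one wished to avoid quoting completeness as a black box, the alternative route would be to take $K_0$ and $L_0$, the relative algebraic closures of $\mathbb{Q}$ in $K$ and $L$ respectively, show via henselianity and uniqueness of the extension of $v_p$ that $K_0 \cong L_0$ as $p$-valued fields, and then appeal to model completeness of the theory of $p$-adically closed fields (of fixed invariants) over this common prime substructure.
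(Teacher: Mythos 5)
The step you yourself flag as the main obstacle is where the argument breaks: there is no completeness theorem for the theory of $p$-adically closed fields with fixed ramification index $e$ and residue degree $f$. The pair $(e,f)$ does not determine the elementary theory. For instance, for odd $p$ take a unit $u\in\mathbb{Z}_p^\times$ which is not a square; then $\mathbb{Q}_p(\sqrt{p})$ and $\mathbb{Q}_p(\sqrt{up})$ are both $p$-adically closed with $e=2$, $f=1$ (so $p$-rank $2$), yet the ring sentence ``$p$ is a square'' holds in the first and fails in the second, so they are not elementarily equivalent. What Prestel--Roquette actually prove is model completeness of the theory of $p$-adically closed fields of a fixed $p$-rank, together with a completeness criterion in terms of the subfield of algebraic numbers: two such fields are elementarily equivalent iff their relative algebraic closures of $\mathbb{Q}$ are isomorphic (as $p$-valued fields). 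Your fallback route inherits the same defect: for your externally constructed $L=\mathbb{Q}_{p^f}(\pi)$ there is no reason that $K_0\cong L_0$ --- in the example above, if $K=\mathbb{Q}_p(\sqrt{up})$ and you happen to choose the Eisenstein polynomial $x^2-p$, the two algebraic parts are genuinely non-isomorphic. So the statement cannot be proved by matching the numerical invariants alone; the choice of the finite extension of $\mathbb{Q}_p$ must be made to depend on $K$ itself.

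This is exactly how the paper proceeds, and the fix is short: replace $K$ by a sufficiently saturated elementarily equivalent field, so that it contains a copy of $\mathbb{Q}_p$, and set $L:=K\cap\mathbb{Q}_p^{\mathrm{alg}}$ (inside $K^{\mathrm{alg}}$). By Prestel--Roquette (their Theorem 3.4) this $L$ is again $p$-adically closed of the same $p$-rank, and since $L\subseteq K$ are models of the same theory, model completeness (their Theorem 5.1) gives $L\preceq K$, hence $L\equiv K$ --- no completeness statement for the class of all $(e,f)$-fields is needed, because the substructure relation does the work. Finally $L/\mathbb{Q}_p$ is algebraic with finite ramification index and finite inertia degree, hence finite. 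If you want to salvage your construction-from-invariants picture, you would have to construct $L$ so that its field of algebraic numbers is isomorphic to that of $K$, at which point you are essentially reproducing the paper's argument.
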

	\begin{proof}
	Let $(K,v)$ be a $p$-adically closed field, which we may assume to be sufficiently saturated and thus to contain an isomorphic copy of $\mathbb{Q}_p$. Let $L$ be the intersection of $K$ and $\mathbb{Q}_p^{alg}$ (taken inside $K^{alg}$). It is also $p$-adically closed of the same $p$-rank as $K$ by \cite[Theorem 3.4]{PR84}. Thus $L$ is elementary equivalent to $K$ by model completeness \cite[Theorem 5.1]{PR84}. On the other hand, $L$ is a finite extension of $\mathbb{Q}_p$  since it has finite ramification index and finite inertia degree, see \cite[page 15]{PR84}.
	\end{proof}
	
%
%

	For future reference we sum up Fact \ref{F:char-strongdep} and Lemma \ref{L:infinite KV}:  
	\begin{lemma}\label{L: QEsNIP}
		If $(K,v)$ is a strongly dependent henselian field then $(K,v)\models T_1$ or $(K,v)\models T_2$. 
	\end{lemma}
	
	In fact, by \cite[Theorem 5.14]{HaH2017} we get the following: 
	\begin{corollary}
		Let $K$ be a strongly dependent field. Then $(K,v)\models T_1$ or $(K,v)\models T_2$ for any henselian valuation $v$ on $K$. 
	\end{corollary}

	\section{extending embeddings}
	The following results are proved in \cite{vd} for the $(0,0)$ case. We use results from \cite{FVK} to give the slight generalizations necessary for our needs.
	
	\begin{lemma}\label{L:val-remains}\cite[Lemma 5.20]{vd}
		Let $L,F$ be $ac$-valued fields, $f:L\to F$ an isomorphism of $ac$-valued fields $g:L'\to F'$ a valued-field isomorphism extending $f$ to some $ac$-valued field extensions  $L'/L$ and $F'/F$ such that $vL=vL'$. Then $g$  commutes with the ac-map.
	\end{lemma}
	\begin{proof}
		Let $x\in L'$, we may write $x=x_1x_2$ where $x_1\in L$ and $x_2\in \mathcal{O}_{L'}^\times$ and thus $ac(x)=ac(x_1)res(x_2)$ and \[f(ac(x))=f(ac(x_1))f(res(x_2))=ac(f(x_1))res(f(x_2))=ac(f(x)).\]
	\end{proof}

	If $(K,v)$ is a valued field we say, following F.-V. Kuhlmnann, that $K$ is Artin-Schreier closed if every irreducible polynomial of the form $x^p-x-c$ has a root in $K$ for $p=\mathrm{char} (Kv)>0$.
	
	\begin{lemma}\label{L:extending value}
		Let $L_1$ and $L_2$ be henselian ac-valued fields, $K$ a common henselian ac-valued subfield and $f:K\to L_2$ be the embedding (so $f$ commutes with the ac-map). Further assume that if $\mathrm{char}(Kv)=p>0$ then for $i=1,2$, for every $\gamma\in vL_i\setminus vK$ with $p\gamma\in vK$, there exists $a\in L_i$ with $a^p\in K$ and $va^p=p\gamma$.
		
		If $vL_1\equiv_{vK}vL_2$ then for any $\gamma_i\in vL_i$  with $tp(\gamma_1/vK)=tp(\gamma_2/vK)$ there exist $b_i\in L_i$ with $v(b_i)=\gamma_i$ such that $f$ may be lifted to an isomorphism of $ac$-valued fields $\tilde f:K(b_1)\to K(b_2)$, in particular $K(b_i)$ are ac-valued fields.
	\end{lemma}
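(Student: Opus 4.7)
The plan is to split into cases based on whether $\gamma_1$ (equivalently $\gamma_2$, by the type assumption) is torsion over $vK$. In the torsion-free case I would start from any $b_i^*\in L_i$ with $v(b_i^*)=\gamma_i$ and multiply it by a unit of $\mathcal{O}_{L_i}^\times$ of residue $ac(b_i^*)^{-1}$ to obtain $b_i$ with $v(b_i)=\gamma_i$ and $ac(b_1)=1=ac(b_2)$; this uses only surjectivity of the residue map onto $(L_iv)^\times$. Torsion-freeness forces $b_i$ to be transcendental over $K$, so the valuation inherited from $L_i$ on $K(b_i)$ is the Gauss valuation: for $P(b_i)=\sum a_j b_i^j$ the minimum $\min_j(v(a_j)+j\gamma_i)$ is attained at a unique index $j_0$, equals $v(P(b_i))$, and determines $ac(P(b_i))=ac(a_{j_0})$. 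The map $b_1\mapsto b_2$ extending $f$ then visibly preserves $v$ and commutes with $ac$.

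In the torsion case, let $n$ be minimal with $n\gamma_i\in vK$; by the common type this $n$ is shared and $\delta:=n\gamma_1=n\gamma_2\in vK$. In the tame subcase $\gcd(n,p)=1$ (including residue characteristic zero) I would choose $c\in K^\times$ with $v(c)=\delta$ and $ac(c)=1$ (by a unit adjustment in $K$) and pick $b_i^*\in L_i$ of value $\gamma_i$ and $ac(b_i^*)=1$ as above. Then $b_i^{*n}/c\in 1+\mathcal{M}_{L_i}$, and Hensel's lemma, applicable since $n$ is prime to the residue characteristic, yields $w_i\in 1+\mathcal{M}_{L_i}$ with $w_i^n=b_i^{*n}/c$; setting $b_i:=b_i^*/w_i$ gives $b_i^n=c$, $v(b_i)=\gamma_i$, $ac(b_i)=1$. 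Defectlessness of $L_i$ and the fundamental inequality force $[K(b_i):K]=n$, whence $x^n-c$ is the minimal polynomial of $b_i$; sending $b_1\mapsto b_2$ extends $f$ to an isomorphism whose valuation and ac compatibility are checked on the basis $1,b_i,\dots,b_i^{n-1}$ exactly as in the transcendental case.

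The wild subcase $p\mid n$ is the main obstacle and is precisely where the extra hypothesis of the lemma enters. Writing $n=p^a m$ with $\gcd(m,p)=1$ and $a\ge 1$, minimality of $n$ forces $p^{a-1}m\gamma_i\notin vK$ while $p\cdot(p^{a-1}m\gamma_i)\in vK$, so the hypothesis produces $e_i\in L_i$ with $e_i^p\in K$ and $v(e_i)=p^{a-1}m\gamma_i$. My proposal is to iterate this construction up the tower, at each stage invoking the hypothesis to extract the next $p$-th root, and then cap with a tame $m$-th root as in the preceding subcase to obtain $b_i$ with $v(b_i)=\gamma_i$. The synchronisation between $L_1$ and $L_2$ is maintained by fixing the $p$-th powers $e_i^p$ to be a common element of the current subfield (via $f$) at each wild step; this is possible because the ac-value of that common element may be freely adjusted in the residue field at every stage. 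Ac-compatibility of the assembled $\tilde f$ then propagates through the tower by Lemma \ref{L:val-remains}, since the value group strictly grows at each layer. The hardest part is verifying that the hypothesis iterates cleanly up the tower and that the $p$-th power choices may be made coherently on both sides; this is the step in which the particular form of the hypothesis (rather than a less quantitative statement of rough $p$-divisibility) is essential.
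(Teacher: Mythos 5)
Your torsion-free case and your tame torsion case are correct and essentially the paper's Cases 1 and 2.1 (handling a composite tame $n$ in one Hensel step rather than prime by prime is fine; also, the ``defectlessness of $L_i$'' you invoke is not among the hypotheses, but it is not needed either, since the fundamental inequality already forces $[K(b_i):K]=n$ from $(vK(b_i):vK)\geq n$). The genuine gap is the wild case, and it sits exactly at the step you yourself flag: the claim that the $p$-th powers $e_1^p,e_2^p$ can be arranged to be one and the same element of the base field ``because the ac-value of that common element may be freely adjusted''. Values and angular components do not control $p$-th powers in residue characteristic $p$: Hensel's lemma is not applicable to $x^p-u$ for a $1$-unit $u$, so multiplying by units of prescribed residue cannot turn a $p$-th root in $L_2$ of \emph{some} element of $K$ of value $p\gamma_2$ into a $p$-th root of the \emph{specific} element $d_1=a_1^p$ produced on the $L_1$-side. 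The displayed hypothesis only yields, separately on each side, some $d_i=a_i^p\in K$ with $v(d_i)=p\gamma_i$, and in general $K(d_1^{1/p})\not\cong K(d_2^{1/p})$ over $K$. Concretely, take $K=\mathbb{Q}_2$ (trivial ac, residue field $\mathbb{F}_2$), $L_1=\mathbb{Q}_2(\sqrt{2})$, $L_2=\mathbb{Q}_2(\sqrt{-2})$ and $\gamma_1=\gamma_2=\frac{1}{2}v(2)$: every displayed hypothesis holds (the elements $2^k\sqrt{\pm 2}$ give the required $p$-th powers $\pm2^{2k+1}\in K$), yet any $b_i$ of value $\gamma_i$ generates $L_i$ over $K$, and these two quadratic extensions are not isomorphic over $\mathbb{Q}_2$. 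So no amount of ac-adjustment can make your synchronisation work from the hypothesis as displayed; the obstruction is genuinely about which elements of $K$ acquire $p$-th roots in each $L_i$.

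What makes the wild case go through in the paper (and in all its applications, where $L_1,L_2\models T_1$) is the stronger input of Proposition \ref{P:has-p-roots-and-more}(2): any element whose value lies in $p\,vL_i$ is already a $p$-th power in $L_i$. One fixes $d:=a_1^p\in K$ coming from $L_1$, observes $v(d)=p\gamma_2\in p\,vL_2$, and therefore finds $b_2\in L_2$ with $b_2^p=d$ for the very same $d$; then $b_1,b_2$ share the minimal polynomial $x^p-d$ and \cite[Lemma 6.40]{FVK} (uniqueness of the valued-field extension) provides the isomorphism -- this is exactly how the analogous step is written out in the RV-setting in Lemma \ref{L:extend p-root-rv}, and it is also what justifies, via Proposition \ref{P:has-p-roots-and-more}, the persistence of the hypothesis up the tower in the concluding induction, which you leave unverified. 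Two further corrections: your appeal to Lemma \ref{L:val-remains} ``since the value group strictly grows at each layer'' is backwards -- that lemma applies precisely when the value group does \emph{not} grow, so in the value-increasing steps ac-compatibility must be checked directly (as you did, correctly, in the tame case); and once $b_i^p=d$ is imposed you can no longer renormalise $b_i$ freely, so even that direct check requires an argument that the chosen roots have matching angular components.
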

	\begin{proof}
		
		If $\gamma_i\in vK$, we have nothing to prove. So we assume this is not the case. By abuse of notation we will not distinguish between $f$ and its extension $\tilde f$. We first show the following:
		\begin{enumerate}
			\item[Case 1:] Assume that $n\gamma_i\notin vK$ for every $n\geq 1$ and let $b_i\in L_i$ be any elements with $v(b_i)=\gamma_i$. 
			Replacing $b_i$, if needed, by $b_i/u_i$ with $u_i\in \mathcal{O}_{L_i}^\times$ such that $ac(u_i)=ac(b_i)$, we have $ac(b_i)=1$.
			
			Because $\gamma$ is not in the divisible hull of $vK$, necessarily $b_i$ is transcendental over $K$. By \cite[Lemma 3.23]{vd} (or \cite[Lemma 6.35]{FVK}), $v$ extends uniquely to $K(b_i)$, $K(b_i)v=Kv$ and $vK(b_i)=vK\oplus \mathbb{Z}\gamma_i$. 
			
			It remains to show that this extension commutes with the ac-map.
			Let $a\in K(b_1)$. Because $vK(b_1)=vK\oplus \mathbb{Z}\gamma_1$ there exists $c$ with $v(c)\in vK$ such that $v(a)=v(cb_1^n)$ for some $n\in\mathbb{Z}$. Thus $a=(cb_1^n)\cdot a/(cb_1^n)$ and $v(a/(cb_1^n))=0$. So $ac(a)=ac(cb_1^n)res(a/(cb_1^n))=ac(c)res(a/(cb_1^n))$ and 
			\[f(ac(a))=f(ac(c))f(res(a/(cb_1^n)))=\]\[ac(f(c)f(b_1^n)f(a/(cb_1^n))))=ac(f(a)).\]
			
			\item[Case 2:] Assume that $q\gamma_i\in vK$ for some prime $q$. 
			\begin{enumerate}
				\item[Case 2.1:] Assume that $q\neq \mathrm{char}(Kv)$. 
				Let $d\in K$ with $v(d)=q\gamma$. We may replace $d$ by $d/u$ with $u\in\mathcal{O}_K^\times$ such that $ac(u)=ac(d)$ and then $ac(d)=1$.
				Let $a_i\in L_i$ with $v(a_i)=\gamma_i$ and $ac(a_i)=1$. Then the polynomial $P_i(x)=x^q-d/a_i^q$ is over $\mathcal{O}_{L_i}$ and satisfies $v(P(1))>0$ and $v(P'(1))=0$. Indeed, $v(d/a_i^q)=0$ and $res(d/a_i^q)=ac(d)/ac(a_i^q)=\bar{1}$, so $\bar{P}(\bar{1})=0$. Since $q\neq \mathrm{char}(Kv)$ we automatically get that $\bar P$ is separable, so $\bar P'(1)\neq 0$.  
				
				This gives $u_i\in L_i$ such that $P_i(u_i)=0$ and $\bar{u_i}=1$. Now let $b_i=a_iu_i$, clearly $b_i^q=d$ and $ac(b_i)=1$. By \cite[Lemma 6.40]{FVK}, $K(b_i)v=Kv$ and $vK(b_i)=vK\oplus \mathbb{Z}\gamma_i$. Now $f|_{K(b_i)}$ commutes with the ac-map just as in case 1.
				
				\item[Case 2.2:] Assume that $q=p=\mathrm{char}(vK)>0$. 
				
				Just as in Case 2.1 we may use \cite[Lemma 6.40]{FVK} to extend the isomorphism (see the statement of \cite[Lemma 6.40]{FVK}).
			\end{enumerate}

		\end{enumerate}
		To conclude, if $n\gamma_i\in vK$ for some $n$ then, if $n$ is prime we are done using Case 2. Otherwise, fix some prime $q|n$ and it is now sufficient to  $\frac{n}{q}v_i$. Now the extension satisfies the assumptions of the lemma (this is obvious if $q\neq p$ and otherwise use Proposition \ref{P:has-p-roots-and-more}). We may now proceed by induction on $n$.
	\end{proof}

	\begin{lemma}\label{L:extending residue}
		Let $L_1$ and $L_2$ be henselian ac-valued field, $K$  a common henselian ac-valued subfield and $f:K\to L_2$ the embedding (so $f$ commutes with the ac-map). 
		
		Assume that $L_1v\equiv_{Kv}L_2v$ and let $\alpha_i\in L_iv$, with $\alpha_i$ separable over $Kv$ if it is algebraic, and $tp(\alpha_1/Kv)=tp(\alpha_2/Kv)$. Then there exist $a_i\in L_i$ with $\bar{a}_i=\alpha_i$ such that $f$ may be lifted to an isomorphism $f:K(a_1)\to K(a_2)$ of ac-valued fields. In particular $K(a_i)$ are ac-valued fields.
	\end{lemma}
	\begin{proof}
		It will suffice, by Lemma \ref{L:val-remains}, to show that we can extend $f$ to an isomorphism of valued field extensions preserving the value group of $K$. We break into two cases.
		\begin{enumerate}
			\item[Case 1:] Assume that $\alpha_i\in L_iv$ are transcendental over $Kv$. Pick any $a_i\in \mathcal{O}_{L_i}$ such that $\bar{a}_i=\alpha_i$. They are necessarily transcendental over $K$. Either by \cite[Lemma 6.35]{FVK} or by \cite[Lemma 3.22]{vd}, $v$ extends uniquely to $K(a_i)$ and $vK(a_i)=vK$. 
			
			\item[Case 2:] If $\alpha_i$ is algerbaic, let $P(X)\in\mathcal{O}_K[x]$ be monic such that $\bar{P}(x)$ is the minimal polynomial of $\alpha_i$ over $Kv$. Pick $b_i\in \mathcal{O}_L$ such that $\bar{b}_i=\alpha_i$. As the $\alpha_i$ are separable over $Kv$, $\bar{P}$ is separable and since $v(P(b_i))>0$ and $v(P'(b_i))=0$ we may find $a_i\in\mathcal{O}_L$ such that $P(a_i)=0$ and $\bar{a_i}=\alpha$. Either by \cite[Lemma 6.41]{FVK} or \cite[Lemma 3.21]{vd}, $v$ extends uniquely to $K(a_i)$ and $vK(a_i)=vK$. 
		\end{enumerate}
	\end{proof}
	
	%
	
	\section{eliminating field quantifiers}
	In this section all fields are assumed to be ac-valued considered in the Denef-Pas language\footnote{If they are models of $T_2$, we consider the structures in the language augmented by constants for an $\mathbb F_p$-basis of $\mathcal O_K/(p)$.}. We use the results from Section \ref{ss:2theories} and meld them with the proof from \cite[Section 3]{amc-kuh}. 
	
	Recall that a \emph{valuation transcendence basis} $\mathcal{T}$ for an extension $L/K$ is a transcendence basis for $L/K$ of the form \[\mathcal{T}=\{x_i,y_i:i\in I, j\in J\}\] such that $\{vx_i: i\in I\}$,  forms a maximal system of values in $vL$ which are $\mathbb Q$-linearly independent over $vK$ and the residues $\{\bar{y}_j:j\in J\}$, form a transcendence basis of $Lv/Kv$.
	
	Recall also that an algebraic extension of henselian fields $L/K$ is \emph{tame} if for every finite subextension $K'/K$:
	\begin{enumerate}
		\item $K'v/Kv$ is separable.
		\item if $p=\mathrm{char}(Kv)>0$ then $(vK':vK)$ is prime to $p$.
		\item $K'/K$ is a defectless extension.
	\end{enumerate}
	$K$ will be \emph{called} tame if it is henselian and every algebraic extension is a tame field. 
	
	\begin{lemma}\label{L:emb-for-tame}
		Let $L$ and $F$ be henselian ac-valued fields and $K$ a common ac-valued subfield. Assume that $L$ is a tame algebraic extension of the henselization of $K$.
		Then for every embedding $\tau=(\tau_\Gamma,\tau_k):(vL, Lv)\hookrightarrow (vF,Fv)$ over $K$, there is an embedding of $L$ in $F$ over $K$ inducing $\tau$ and commuting  with the ac-map.
	\end{lemma}
	\begin{proof}
		By the uniqueness of the henselization of $K$, we may extend the embedding $K\hookrightarrow F$  to the henselization of $K$. Since the henselization is an immediate extension, by Lemma \ref{L:val-remains} the embedding respects the ac-map. Since every algebraic extension is the union of its finite sub-extensions we may further assume that $L/K$ is finite.
		
		Since $Lv/Kv$ is finite and separable it is simple (recall $L/K$ is tame). Assume that $Kv(\alpha)=Lv$. By Lemma \ref{L:extending residue}(Case 2) there exists $c\in L$ with $\bar{c}=\alpha$, $K(c)v=Lv$ and $vK(c)=vK$ such that we may extend the embedding of $K\hookrightarrow F$ to an embedding $K(c)\hookrightarrow F$ respecting the ac-map.
		
		Set $K':=K(c)$. Since $L/K'$ is finite, the group $vL/vK'$ is a finite torsion group: \[vL/vK'=(\gamma_1 +vK)\mathbb{Z}\oplus\dots\oplus (\gamma_r +vK)\mathbb{Z},\] and by tameness the order of each $\gamma_i$ is prime to $p=\mathrm{char}(Kv)$. 
		Using Lemma \ref{L:extending value}(Case 2.1) repeatedly there exist $d_1,\dots,d_r\in L$ with $v(d_i)=\gamma_i$ such that $L/K'(d_1,\dots,d_r)$ is an immediate extension and we may extend the embedding of $K'\hookrightarrow F$ to an embedding $K'(d_1,\dots,d_r)\hookrightarrow F$ preserving the ac-map.
		
		Finally, $K'(d_1,\dots,d_r)$ is a tame field since it is an algebraic extension of a tame field, thus $L/K'(d_1,\dots,d_r)$ is defectless and immediate. Since henselian defectless fields are algebraically maximal, it follows that $K'(d_1,\dots,d_r)=L$ and we are done.
	\end{proof}

	\begin{lemma}\label{L:emb-tame-of-transc}
		Let $L$ and $F$ be henselian ac-valued fields and $K$ a common ac-valued subfield. Assume that $L$ admits a valuation transcendence basis $\mathcal{T}$ such that $L$ is a tame extension of $K(\mathcal{T})^h$. Then for every embedding $\tau=(\tau_\Gamma,\tau_k):(vL, Lv)\hookrightarrow (vF,Fv)$ over $K$, there is an embedding of ac-valued fields $L\hookrightarrow F$ over $K$ inducing  $\tau$. 
	\end{lemma}
	\begin{proof}
		By using Lemma \ref{L:extending value}(Case 1) and  Lemma \ref{L:extending residue}(Case 1) repeatedly we may extend the embedding $K\hookrightarrow F$ to an embedding $K(\mathcal{T}) \hookrightarrow F$ over $K$ respecting the ac-map, and inducing the embedding $\tau$. The result now follows from Lemma \ref{L:emb-for-tame}
	\end{proof}

	The following embedding theorem is, as usual, the main result: 
	
	\begin{theorem}\label{P:type-A}
		Let $L$ and $F$ be ac-valued fields. Assume that $F$ is $|L|^+$-saturated and $K$ a common ac-valued substructure in the Denef-Pas language. 
		If both $L$ and $F$ are models of $T_1$, then for every embedding $\tau=(\tau_\Gamma,\tau_k):(vL, Lv)\hookrightarrow (vF,Fv)$ over $K$, there is an embedding of $L$ in $F$ over $K$ inducing $\tau$ and preserving the ac-map.
	\end{theorem}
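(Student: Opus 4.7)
The plan is to follow Kuhlmann's approach from \cite[Theorem~2.6]{amc-kuh}, extending the embedding $g\colon K\hookrightarrow F$ to $L$ in stages, and invoking Lemma~\ref{L:adap-johnson} at the final step in place of his Fact~\ref{F:final-extension} (which is only available in the algebraically maximal Kaplansky setting). Throughout, the $|L|^+$-saturation of $F$ will be used to realize values and residues of the prescribed types over $g(K)$.

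First, fix a valuation transcendence basis $\mathcal T$ of $L/K$ and let $L_1\subseteq L$ be the maximal tame algebraic extension of $K(\mathcal T)^h$ inside $L$. Lemma~\ref{L:emb-tame-of-transc} then extends $g$ to an ac-valued embedding $L_1\hookrightarrow F$ inducing the restrictions of $\tau_\Gamma$ and $\tau_k$ to $vL_1$ and $L_1v$. After this reduction, $L/L_1$ is algebraic, $vL/vL_1$ is $p$-torsion, and $Lv/L_1v$ is purely inseparable algebraic.

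For this remaining non-tame algebraic part, we first absorb the $p$-torsion in the value group: for every $\gamma\in vL$ with $p\gamma\in vL_1$, any $d\in L_1$ with $v(d)=p\gamma$ is a $p$-th power in $L$ by Proposition~\ref{P:has-p-roots-and-more}(2) applied to $L\models T_1$, yielding $a\in L$ with $v(a)=\gamma$ and $a^p=d\in L_1$; Lemma~\ref{L:extending value}(Case~2.2) then extends the embedding across $L_1(a)$. Iterating, we obtain an ac-valued embedding $g\colon K_1\hookrightarrow F$ with $vK_1=vL$, $K_1v=L_1v$, and $Lv/K_1v$ still purely inseparable algebraic --- precisely the hypothesis of Lemma~\ref{L:adap-johnson}. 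Using saturation of $F$, we construct a subfield $F^*\subseteq F$ containing $g(K_1)$, of type $T_1$, with $vF^*=vg(K_1)$ and $F^*v$ a purely inseparable algebraic extension of $g(K_1)v$ matching $\tau_k(Lv)$: concretely, adjoin to $g(K_1)$ the unique $p^n$-th roots in the perfect field $F$ of those $g(c)\in g(K_1)$ whose residues equal $\tau_k(\bar\alpha)^{p^n}$ as $\bar\alpha$ ranges over $Lv$, then henselize; Proposition~\ref{P:has-p-roots-and-more}(3) is used to ensure that, upon suitable closure inside $F$, the result is a model of $T_1$. Lemma~\ref{L:adap-johnson} applied to $K_1$, $L$ and $F^*$ then yields a field isomorphism between the relative algebraic closure of $K_1$ in $L$ (which is all of $L$, since $L/K_1$ is algebraic) and the relative algebraic closure of $g(K_1)$ in $F^*$; Lemma~\ref{L:val-remains} ensures this is an ac-valued embedding, and by construction it induces $\tau$.

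The hardest part will be the construction of $F^*$: we must guarantee $F^*\models T_1$ while keeping $vF^*$ equal to $vg(K_1)$ and $F^*v$ equal to a purely inseparable algebraic extension of $g(K_1)v$ inside $Fv$ containing $\tau_k(Lv)$, so that the hypothesis of Lemma~\ref{L:adap-johnson} is actually available. Saturation of $F$ is essential to realize the required $p^n$-th roots, Proposition~\ref{P:has-p-roots-and-more}(3) ensures that taking relative algebraic closure in $F$ preserves $T_1$, and some additional care (perhaps via coarsening by $\Delta_p$, mirroring the proof of Lemma~\ref{L:adap-johnson}) is needed in the mixed-characteristic case, where $vL$ is only roughly $p$-divisible and one must avoid enlarging the value group when passing to the perfect hull.
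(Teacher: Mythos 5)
Your first two stages coincide with the paper's proof (the tame part via Lemma~\ref{L:emb-tame-of-transc}, then absorbing the $p$-torsion of the value group via Proposition~\ref{P:has-p-roots-and-more}(2) and Lemma~\ref{L:extending value}, Case~2.2). The decisive gap comes right after: you assume that $L/K$ admits a \emph{valuation transcendence basis} $\mathcal T$, and deduce that $L/K_1$ is algebraic, so that Lemma~\ref{L:adap-johnson} alone finishes the proof. In general no such basis exists: the set $\mathcal T$ one can actually choose (values $\mathbb Q$-linearly independent over $vK$, residues algebraically independent over $Kv$) is merely algebraically independent over $K$, and a model of $T_1$ typically has immediate extensions of arbitrary transcendence degree over $K(\mathcal T)$ --- any pseudo-limit of a pc-sequence of transcendental type is transcendental while leaving value group and residue field unchanged (this can already happen with $L/K$ finitely generated and immediate, where $\mathcal T=\emptyset$). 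So Lemma~\ref{L:adap-johnson} can only reach the relative algebraic closure $L'''$ of $K_1$ in $L$, and you are left with an immediate, generally transcendental, extension $L/L'''$. This is exactly the part of the argument your proposal has no mechanism for, and it is where the paper uses Kaplansky's Theorems~1 and~2: $L'''\models T_1$ by Proposition~\ref{P:has-p-roots-and-more}(3), hence is algebraically maximal, so each $a\in L\setminus L'''$ is a pseudo-limit of a pc-sequence of transcendental type over $L'''$, which is realized in $F$ by $|L|^+$-saturation (pc-completeness), with ac-compatibility from Lemma~\ref{L:val-remains}. Without this step your argument only proves the theorem for extensions without transcendence defect.

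A secondary problem is the construction of $F^*$, which you yourself flag as the hardest point but do not resolve. Proposition~\ref{P:has-p-roots-and-more}(3) applies only to \emph{relatively algebraically closed} subfields of $F$, and taking the relative algebraic closure of $g(K_1)(\text{roots})^h$ in $F$ may enlarge the value group by prime-to-$p$ torsion and adjoin separable residue extensions, destroying precisely the hypotheses $vF^*=vg(K_1)$ and $F^*v/g(K_1)v$ purely inseparable that Lemma~\ref{L:adap-johnson} requires; also ``unique $p^n$-th roots'' only makes sense in equal characteristic $p$ (in mixed characteristic the roots are not unique, though any choice has the right residue since $p$-th roots of unity have residue $1$). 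Two smaller remarks: compatibility with $\tau_k$ is automatic once the residue extension is purely inseparable, since residue embeddings of purely inseparable extensions are unique, so you need not insist that $F^*v$ contain $\tau_k(Lv)$; and the paper itself is terse at the point where Lemma~\ref{L:adap-johnson} is invoked, so a careful construction of a suitable $F$-side subfield would be worthwhile --- but even granting it, the missing pc-sequence step above remains the essential gap.
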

	\begin{proof}
		In order to embed $L$ in $F$ (over $K$) it suffices to embed every finitely generated sub-extension. So we may assume that $L/K$ is a finitely generated extension. 
		
		Let $\mathcal{T}=\{x_i,y_i: i\in I, j\in J\}$ be such that $\{vx_i: i\in I\}$ is a maximal system of $\mathbb Q$-linearly independent values in $vL$ over $vK$ and the residues $\{\bar{y}_j: j\in J\}$ are a transcendence basis of $Lv/Kv$. By \cite[Lemma 6.30]{FVK} $\mathcal{T}$ is algebraically independent over $K$.
		
		Let $L'$ be the maximal tame algebraic extension of the henselization $K(\mathcal{T})^h$ in $L$. By definition it is an algebraic extension so $\mathcal{T}$ is a transcendence basis for $L'/K$ and it is a tame extension of $K(\mathcal{T})^h$. We may use Lemma \ref{L:emb-tame-of-transc} and embed $L'$ in $F$ over $K$, this embedding induces $\tau$ and commutes with the ac-map.
		
		Since $L'$ is the maximal tame algebraic extension of $K(\mathcal{T})^h$, necessarily $vL/vL'$ is a $p$-group and $Lv/L'v$ is a purely inseparable algebraic extension. 		
		
		By Lemma \ref{P:has-p-roots-and-more}(2), we may repeatedly apply Lemma \ref{L:extending value}(Case 2.2) and extend the embedding of $L'\hookrightarrow F$ to an intermediary ac-valued field $L'\subseteq L''\subseteq L$ with $vL''=vL$ (since $vL/vL'$ is a $p$-group) and $Lv/L''v$ purely inseparable algebraic extension, in such a way that it commutes with the ac-map.
		

		By Lemma \ref{L:adap-johnson}, we may extend the embedding to $L'''$, the relative algebraic closure of $L''$ in $L$. The embedding $L'''\hookrightarrow F$ preserves the ac-map, by Lemma \ref{L:val-remains} since $vL''=vL'''$.

		By Proposition \ref{P:has-p-roots-and-more} $L'''$ is an ac-valued field which is a model of $T_1$, and hence it is algebraically maximal. Moreover,  notice that $L/L'''$ is immediate. 
		
	Let $a\in L\setminus L'''$. Since $L'''(a)/L'''$ is an immediate extension, by \cite[Theorem 1]{kaplansky} there exists a pseudo-cauchy sequence with $a$ as a pseudo-limit but with no pseudo-limit in $L'''$. Since $L'''$ is algebraically maximal it must be of transcendental type. Since $F$ is $|L|^+$-saturated it is also $|L|^+$-pc-complete. By \cite[Theorem 2]{kaplansky}, we may thus find a pc-limit $a'\in F$ and the map $a\mapsto a'$ extends the embedding to an embedding of valued fields, it preserves the ac-map by Lemma \ref{L:val-remains}. Doing this repeatedly we may embed $L$ in $F$ over $K$ as ac-valued fields.
	\end{proof}
	
	\begin{corollary}\label{C:field-elim-T1T2}
		Let $L$ and $F$ be ac-valued fields and $K$ a common substructure in the Denef-Pas language. If both $L$ and $F$ are models of $T_1$ then  \[(vL,Lv)\equiv_{(vK,Kv)}(vF,Fv) \Longrightarrow L\equiv_K F.\]
		
		As a result, $T_1$ eliminate field quantifiers and the value group and residue field are stably embedded as pure structures.
	\end{corollary}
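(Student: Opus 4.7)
The first implication is proved by a back-and-forth, with Theorem~\ref{P:type-A} as the engine. Given $(vL, Lv) \equiv_{(vK, Kv)} (vF, Fv)$, pick a cardinal $\kappa > |L|+|F|$ and let $F^\ast \succeq F$ be $\kappa$-saturated. Using saturation of $(vF^\ast, F^\ast v)$ and the hypothesis, realize the $(vK,Kv)$-type of $(vL,Lv)$ inside $(vF^\ast, F^\ast v)$ to obtain an \emph{elementary} embedding $\tau_0 \colon (vL, Lv) \hookrightarrow (vF^\ast, F^\ast v)$ over $(vK, Kv)$. Theorem~\ref{P:type-A} lifts $\tau_0$ to an ac-valued field embedding $f_0 \colon L \hookrightarrow F^\ast$ over $K$; crucially, the auxiliary-sort component of $f_0$ is $\tau_0$, which is elementary. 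Now iterate in the reverse direction: take $L^\ast \succeq L$ of sufficiently large saturation, realize the type of $(vF^\ast, F^\ast v)$ over $(vf_0(L), f_0(L)v)$ in $(vL^\ast, L^\ast v)$ (consistent by elementarity of $\tau_0$), and apply Theorem~\ref{P:type-A} again to obtain $F^\ast \hookrightarrow L^\ast$ over $f_0(L)$. Alternating and passing to the union yields elementary extensions $\widetilde L \succeq L$ and $\widetilde F \succeq F$ isomorphic as ac-valued fields over $K$, hence $L \equiv_K F$.

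Field quantifier elimination is then a formal corollary. By the standard criterion, elimination of field quantifiers in the Denef-Pas language relative to the auxiliary sorts amounts exactly to the statement that, over any common substructure $K$, the complete type of a model of $T_1$ is determined by the $(vK,Kv)$-types of its value group and residue field---which is precisely the first assertion. A routine compactness argument converts this into the formula-level statement that every Denef-Pas formula is equivalent modulo $T_1$ to a formula containing no quantifiers over the field sort.

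Stable embeddedness is immediate from field QE. A subset $X \subseteq (vL)^n$ defined by $\phi(\bar s; \bar c)$ with parameters $\bar c$ from $L$ may, after field-QE, be assumed to be defined by a formula with no field quantifiers. In such a formula every field parameter of $\bar c$ appears only inside an application of $v$ or $ac$, so $X$ is definable from the auxiliary-sort parameters $v(\bar c)$ and $ac(\bar c)$. The same argument applies verbatim to subsets of $(Lv)^n$.

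The main obstacle is the back-and-forth in the first step: one must ensure both that each newly introduced saturated extension realizes the auxiliary-sort type of the other side over the latest embedded copy, and that the auxiliary-sort embeddings remain elementary throughout (so that the consistency needed to realize the next type persists). This is handled by fixing $\kappa$ large enough at the outset and choosing the auxiliary-sort maps at each stage as realizations of complete types in sufficiently saturated models, which automatically produces elementary embeddings at every step.
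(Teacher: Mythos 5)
Your proposal is correct and follows essentially the same route as the paper: the paper also reduces to the embedding theorem (Theorem \ref{P:type-A}) by passing to elementary extensions in which the value groups and residue fields become isomorphic over $(vK,Kv)$, and then invokes the standard back-and-forth/relative-QE criterion that you spell out in more detail. The extra care you take with the zigzag of saturated extensions and the elementarity of the auxiliary-sort embeddings is exactly the content of the paper's ``the rest is standard''.
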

	\begin{proof}
		There exist elementary extensions $L^*$ and $F^*$ of $L$ and $F$, respectively, such that $vL^*\cong_{vK}vF^*$ and $L^*v\cong_{Kv}F^*v$. Since $L^*\equiv_{K} F^*$ implies $L\equiv_{K} F$, we may assume from the start that $\tau=(\tau_\Gamma,\tau_k):(vL,Lv)\cong_{(vK,Kv)}(vF,Fv)$. The rest is standard and follows from the previous theorem.
	\end{proof}
	
	As special cases of the above corollary consider $K=(\mathbb Q, \mathbb Q, 0)$ if $L$ is of equi-characteristic $(0,0)$; let $K=(\mathbb F_p,\mathbb F_p, 0)$  if $L$ is of equi-characteristic $(p,p)$ (for $p>0$) and in mixed characteristic take $K=(\mathbb Q, \mathbb F_p, v(p)\mathbb Z)$ allowing us to conclude: 
	\begin{corollary}\label{C: completeness}
		The theory $T_1$ is complete once: 
		\begin{enumerate}
			\item the complete theories of the value group and the residue field are fixed and
			\item $v(p)$ is specified,
		\end{enumerate}  
	\end{corollary}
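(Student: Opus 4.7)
The proof is a direct application of Corollary \ref{C:field-elim-T1T2} to the prime ac-valued substructures indicated just before the statement. Given any two models $L, F \models T_1$ with matching data (1) and (2), the characteristic type is already determined: equi-characteristic $(0,0)$ when $v(p)=0$, equi-characteristic $(p,p)$ when $v(p)=\infty$, and mixed characteristic $(0,p)$ when $0<v(p)<\infty$. In each case, take $K$ as suggested in the text preceding the statement: $(\mathbb{Q},\mathbb{Q},0)$, $(\mathbb{F}_p,\mathbb{F}_p,0)$, or $(\mathbb{Q},\mathbb{F}_p, v(p)\mathbb{Z})$ respectively (the last carrying the $p$-adic valuation). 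The structure $K$ embeds canonically into each of $L$ and $F$ via the prime subfield, so may be viewed as a common ac-valued substructure.

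It then remains to verify the hypothesis $(vL,Lv)\equiv_{(vK,Kv)}(vF,Fv)$ of Corollary \ref{C:field-elim-T1T2}. The residue field $Kv$ is always the prime subfield of the residue sort, whose elements are $0$-definable in any residue field; hence $Lv\equiv Fv$, given by condition (1), upgrades automatically to $Lv\equiv_{Kv}Fv$. For the value group, $vK$ is trivial in the equi-characteristic cases, so $vL\equiv vF$ from (1) already yields $vL\equiv_{vK}vF$. In mixed characteristic $vK=v(p)\mathbb{Z}$ is generated by $v(p)$, and condition (2), which specifies $v(p)$, provides exactly the naming needed to guarantee $vL\equiv_{vK}vF$. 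Corollary \ref{C:field-elim-T1T2} then gives $L\equiv_K F$, hence $L\equiv F$. Since $L$ and $F$ were arbitrary, the enlarged theory is complete.

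The only mild point to check is that $K$ really is a common substructure in the Denef-Pas language, i.e.\ that the ac-map is correctly inherited. In the two equi-characteristic cases the valuation on $K$ is trivial, so ac coincides with res on $K^\times$ and nothing is at issue. In mixed characteristic the ac-map on $\mathbb{Q}^\times$ is determined multiplicatively by its value on $p$ (since it agrees with res on $p$-adic units), so one absorbs $ac(p)\in \mathbb{F}_p^\times$ into the specified residue-field data, or equivalently normalizes $ac(p)=1$ using the freedom in the choice of angular component. Once this identification is in place, the argument above goes through unchanged, and this is the only potential obstacle.
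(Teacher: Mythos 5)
Your proposal is correct and is essentially the paper's own argument: the paper proves this corollary precisely by taking the prime ac-valued substructures $(\mathbb Q,\mathbb Q,0)$, $(\mathbb F_p,\mathbb F_p,0)$ and $(\mathbb Q,\mathbb F_p, v(p)\mathbb Z)$ indicated just before the statement as the common substructure $K$ and applying Corollary \ref{C:field-elim-T1T2} after matching the value-group and residue-field theories over $(vK,Kv)$. Your closing paragraph about how the ac-map restricts to $\mathbb Q$ in mixed characteristic is in fact more care than the paper itself takes (it passes over this point silently), and does not alter the fact that the route is the same.
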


	Every algebraically maximal Kaplansky field is a model of $T_1$ so
	\begin{corollary}
		The theory of any algebraically maximal Kaplansky ac-valued field eliminates field quantifiers in the Denef-Pas language.
	\end{corollary}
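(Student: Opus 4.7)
The corollary is essentially an immediate consequence of Corollary \ref{C:field-elim-T1T2}, so the plan is simply to observe that every algebraically maximal Kaplansky ac-valued field is a model of $T_1$ and then invoke that corollary.

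To verify the inclusion, let $(K,v)$ be an algebraically maximal Kaplansky field. Henselianity and defectlessness both follow from algebraic maximality (a henselian valued field is defectless iff it admits no immediate algebraic proper extension, and algebraic maximality implies henselianity). Perfection of the residue field is part of the Kaplansky hypothesis in residue characteristic $p>0$, and is automatic in residue characteristic $0$; the fact that the residue field has no finite extensions of degree divisible by $p$ follows from the Kaplansky hypothesis together with perfection (since in a perfect field of characteristic $p$ every finite extension is separable). Rough $p$-divisibility is immediate: in residue characteristic $0$ the condition is vacuous, and in equi-characteristic $p$ the Kaplansky hypothesis supplies $p$-divisibility of the whole value group. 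Perfection of the base field requires a brief remark: in characteristic $0$ it is trivial, while in characteristic $p$ it follows from the Kaplansky hypotheses, since otherwise an Artin--Schreier-type obstruction would contradict one of the above properties (and indeed the second item of the remark following the definition of $T_1$ asserts this). Thus all four axioms of $T_1$ hold for $(K,v)$.

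Having established $(K,v)\models T_1$, the conclusion of the corollary is nothing other than the statement of Corollary \ref{C:field-elim-T1T2} specialised to this class. No extra work is required.

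The only non-routine step is the verification that algebraic maximality combined with the Kaplansky axioms forces perfection of the base field, but this is already flagged by the authors in the remark after the definition of $T_1$, so I would simply refer the reader there rather than reproving it.
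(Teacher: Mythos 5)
Your proposal is correct and takes essentially the same route as the paper, whose proof consists of the single observation that every algebraically maximal Kaplansky field is a model of $T_1$ (item (3) of the remark following the definition of $T_1$) followed by an appeal to Corollary \ref{C:field-elim-T1T2}. One small caveat: the parenthetical equivalence you invoke (``henselian defectless iff no proper immediate algebraic extension'') is false for general henselian fields, and the implication ``algebraically maximal $\Rightarrow$ defectless'' really uses the Kaplansky hypothesis --- but that is exactly the standard fact the paper itself relies on (cf.\ the parenthetical in Fact \ref{F:final-extension}), so the argument goes through.
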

	
	
	Combining these results with Lemma \ref{L: QEsNIP}, Fact \ref{F:T_2-elim-omegaDP}, and the fact that it is known for the $(0,0)$ case, we have shown: 
	\begin{corollary}\label{C:elim-fqe-strong}
		Let $(K,v)$ be a strongly dependent henselian ac-valued field. If $Kv$ is infinite then $\mathrm{Th}(K,v)$ eliminates field quantifiers in the Denef-Pas language, and if $Kv$ is finite then $\mathrm{Th}(K,v)$ eliminates field quantifiers in the generalized Denef-Pas language.
	\end{corollary}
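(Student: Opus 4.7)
The plan is to assemble the corollary from pieces already in place; no new technical ingredient is required, only a case split matching the two theories $T_1$ and $T_2$ to the cardinality of the residue field. First I would invoke Lemma \ref{L: QEsNIP} to conclude that every strongly dependent henselian $(K,v)$ satisfies $(K,v)\models T_1$ or $(K,v)\models T_2$.

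Next I would align this dichotomy with the split on $Kv$. On the one hand, by definition of $T_2$ (henselian $p$-valued of $p$-rank $d$), $\mathcal{O}_K/(p)$ is an $\mathbb{F}_p$-vector space of dimension $d$, so the quotient $Kv$ is finite; in particular, if $Kv$ is infinite, then $(K,v)\not\models T_2$, and Lemma \ref{L: QEsNIP} forces $(K,v)\models T_1$ (which is also what Lemma \ref{L:infinite KV} provides directly). On the other hand, if $Kv$ is finite then Fact \ref{F:char-strongdep} rules out the equi-characteristic cases and the mixed-characteristic infinite-residue case, leaving only the $p$-valued possibility, so $(K,v)\models T_2$.

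With the case split in hand, the two conclusions follow from the corresponding elimination theorems. When $Kv$ is infinite, $(K,v)\models T_1$, and Corollary \ref{C:field-elim-T1T2} yields elimination of field quantifiers for $T_1$ in the Denef-Pas language; since $\mathrm{Th}(K,v)\supseteq T_1$, the property transfers to $\mathrm{Th}(K,v)$. When $Kv$ is finite, $(K,v)\models T_2$, and Fact \ref{F:T_2-elim-omegaDP} gives elimination of field quantifiers in the generalized Denef-Pas language.

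There is no real obstacle at this stage: the substantive work is already done in the embedding theorem, Theorem \ref{P:type-A}, and in its Corollary \ref{C:field-elim-T1T2} for the $T_1$ side, while the $T_2$ side is the cited result of \cite{raf}. The only care needed in the write-up is to point out clearly why the finite/infinite dichotomy on $Kv$ is matched by the $T_2$/$T_1$ dichotomy on theories, which is what Fact \ref{F:char-strongdep} guarantees.
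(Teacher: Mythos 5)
Your proposal is correct and follows the paper's own route: Lemma \ref{L: QEsNIP} (equivalently Lemma \ref{L:infinite KV} plus Fact \ref{F:char-strongdep}) to split into the $T_1$/$T_2$ cases according to whether $Kv$ is infinite or finite, then Corollary \ref{C:field-elim-T1T2} for the $T_1$ case and Fact \ref{F:T_2-elim-omegaDP} for the $T_2$ case. Your observation that the $(0,0)$ case is absorbed into $T_1$ is consistent with the paper, which merely mentions that case separately as already known.
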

	
	The above corollaries give Theorem \ref{T:theorem A} of the introduction. We now proceed to some applications. We remind (\cite[Proposition 3.4]{HaH2017}) that strongly dependent ordered abelian groups have quantifier elimination in the language \[L=L_{oag}\cup \{(x=_{H_i} y +k_{G/H_i})_{k\in \mathbb{Z}, i< \alpha}, (x\equiv_{m,H_i} y+k_{G/H_i})_{k\in \mathbb{Z},m\in\mathbb{N}, i< \alpha}\},\] where
	\begin{list}{•}{}
		\item $L_{oag}$ is the language of ordered groups, 
		\item for each $k\in \mathbb{Z}$, "$x=_H y+k_{G/H}$" is defined by $\pi (x)=\pi (y)+k_{G/h}$ for $\pi: G\to G/H$ and $k_{G/H}$ denotes $k$ times the minimal positive element of $G/H$, if it exists, and $0$ otherwise.
		\item for each $k\in\mathbb{Z}$ and each $m\in \mathbb{N}$, "$x\equiv_{m,H} y+k_{G/H}$" is defined by $\pi(x)\equiv_m \pi(y) +k_{G/H}$.
	\end{list}
	
	Thus, the above corollary implies that a strongly dependent henselian field $(K,v)$ has quantifier elimination modulo $Kv$ in the (generalized) Denef-Pas language augmented by the new predicates in $L$. In particular, if $Kv$ has explicit quantifier elimination in some natural language expanding the language of rings (e.g., $Kv$ is an algebraically closed field or a real closed field) we get complete quantifier elimination for $(K,v)$. This strengthens \cite[Theorem 3.2.16]{johnson}.
	
	%
	%
	
	Strong dependence of an ac-valued field $(K,v)$ admitting elimination of field quantifiers in the Denef-Pas language follows from strong dependence of $vK$ and $Kv$ by \cite[Claim 1.17(2)]{ShDep09}. This result can probably be proved for ac$_\omega$-valued fields as well, but we take a slightly different approach. We note that \cite[Claim 1.17(2)]{ShDep09} gives this result for a slightly different language and that in $\mathbb{Q}_p$ Shelah's $3$-sorted language is bi-interpretable with the multi-sorted structure given by the generalized Denef-Pas language. This will suffice for our needs.
	
	\begin{lemma}\label{L:transfer for models of T1T2}
		Let $(K,v)$ be either a model of $T_1$ or of $T_2$. If $Kv$ and $vK$ are strongly dependent then so is $(K,v)$.
	\end{lemma}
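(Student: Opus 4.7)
The plan is to split into the two cases $(K,v)\models T_1$ and $(K,v)\models T_2$, and in each case to reduce strong dependence of $(K,v)$ to that of the value group and residue field (together with the residue rings in the $T_2$ case) via elimination of field quantifiers, then invoke an adaptation of \cite[Claim 1.17(2)]{ShDep09}.

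In the $T_1$ case I would use Corollary \ref{C:field-elim-T1T2}: every formula in the Denef-Pas language is equivalent modulo $T_1$ to one whose quantifiers range only over the value group and residue field sorts, and these two sorts are moreover stably embedded as pure structures. Thus the only coupling of the field sort to the auxiliary sorts in a quantifier-free formula is through the functions $v$ and $ac$, each with codomain in a strongly dependent sort. This puts us precisely in the setting of Shelah's \cite[Claim 1.17(2)]{ShDep09}: the dp-rank argument there applies verbatim, since $ac$ is a function and so contributes no additional quantifier complexity to the combinatorial count. Strong dependence of $(K,v)$ then follows from strong dependence of $vK$ and $Kv$.

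For $T_2$ I would note that $Kv$ is finite by definition of a $p$-valued field and that, by the Prestel--Roquette fact cited above, the interval $[0,v(p)]$ is finite; consequently every residue ring $\mathcal{O}/\mathcal{M}^n$ is finite and trivially strongly dependent. Fact \ref{F:T_2-elim-omegaDP} then yields field quantifier elimination in the generalized Denef-Pas language, so again the field-sort content of every formula is encoded through $v$ and the $ac_n$. I would then appeal to the bi-interpretability (noted in the paragraph preceding the lemma) between Shelah's $3$-sorted language and the multi-sorted generalized Denef-Pas structure to transfer \cite[Claim 1.17(2)]{ShDep09} to our setup. The main obstacle is making this appeal rigorous in the enriched language: one must verify that the interpreting formulas have uniformly bounded quantifier complexity so that dp-rank is genuinely preserved under the back-and-forth, and that the extra constants naming an $\mathbb{F}_p$-basis of $\mathcal{O}_K/(p)$ do not inflate the dp-rank count. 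The finiteness of both $Kv$ and $[0,v(p)]$ is precisely what makes this feasible.
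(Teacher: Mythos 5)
Your $T_1$ case is essentially the paper's argument, with one small step missing: a model of $T_1$ is not given to you as an ac-valued field, so before invoking Corollary \ref{C:field-elim-T1T2} together with \cite[Claim 1.17(2)]{ShDep09} you must first pass to an elementary extension admitting an angular component map (Fact \ref{F:exist-ac}); since strong dependence is preserved under elementary equivalence and reducts, this costs nothing, but it has to be said.

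The $T_2$ case has a genuine gap. You propose to work in the generalized Denef-Pas language and ``transfer'' \cite[Claim 1.17(2)]{ShDep09} by bi-interpretability, but the remark preceding the lemma asserts such bi-interpretability only \emph{in} $\mathbb{Q}_p$; a general model of $T_2$ can have an arbitrary (discretely ordered, strongly dependent) value group and is not covered by that remark. What your route really requires is an extension of Shelah's argument to ac$_\omega$-valued fields, i.e.\ to a structure with infinitely many residue-ring sorts $\mathcal{O}/\mathcal{M}^n$ and infinitely many maps $ac_n$ --- precisely the step the paper explicitly declines to carry out (``this can probably be proved \dots but we take a slightly different approach''). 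You name this as ``the main obstacle'' and then leave it unresolved; finiteness of $Kv$ and of $[0,v(p)]$ (and of the residue rings) is a plausibility argument, not a verification, and the named constants are a red herring since naming constants never affects strong dependence. The paper's proof avoids the obstacle altogether: for $(K,v)\models T_2$, the core field $(K_1,v^{\Delta_p})$ is $p$-adically closed (Fact \ref{F:core-field-p-adically-closed}), hence elementarily equivalent to a finite extension of $\mathbb{Q}_p$ (Fact \ref{F:p-adically-elem-fiQp}) and therefore strongly dependent by Shelah; the coarsening $(K,v^{vK/\Delta_p})$ is henselian of equi-characteristic $(0,0)$, its residue field is $K_1$ and its value group $vK/\Delta_p$ is strongly dependent as a quotient by \cite[Theorem 4.20]{HaH2017}, so after passing to an elementary extension with an ac-map the ordinary Denef-Pas elimination (Corollary \ref{C:field-elim-T1T2}) plus \cite[Claim 1.17(2)]{ShDep09} gives that the pure field $K$ is strongly dependent; finally \cite[Theorem 5.14]{HaH2017}, applied to $K$ and the henselian valuation $v$, yields that $(K,v)$ is strongly dependent. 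Either adopt this coarsening argument, or supply in full the adaptation of Shelah's claim to the ac$_\omega$-setting; as written, your $T_2$ case is not a proof.
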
	 
	\begin{proof}
	If $(K,v)$ is a model of $T_1$ then by passing to an elementary extension we may assume that $(K,v)$ is an ac-valued field and thus, by Corollary \ref{C:field-elim-T1T2}, eliminates field quantifiers in the Denef-Pas language. We may thus use \cite[Claim 1.17(2)]{ShDep09}.
	
	Assume $(K,v)$ is a model of $T_2$.  By Fact \ref{F:core-field-p-adically-closed} and Fact \ref{F:p-adically-elem-fiQp}, $(K_1,v^{\Delta_p})$,its core field, is elementary equivalent to a finite extension of $\mathbb{Q}_p$ and thus strongly dependent by \cite[Claim 1.15, 1.16 and Observation 1.19(1)]{ShDep09}.
	
	Now consider the valued field $(K,v^{vK/\Delta_p})$. Its residue field is $K_1$ and hence strongly dependent and its value groups is a quotient of a strongly dependent abelian group and hence also strongly dependent by \cite[Theorem 4.20]{HaH2017} (see also \cite[Corollary 4.21]{HaH2017}). By passing to an elementary extension we may assume that $(K,v^{vK/\Delta_p})$ is an ac-valued field, and since it is of characteristic $(0,0)$ it admits elimination of field quantifiers and thus strongly dependent by \cite[Claim 1.17(2)]{ShDep09}. Since strong dependence is preserved under reducts and elementary equivalence, we get that $K$ is strongly dependent. The result now follows by \cite[Theorem 5.14]{HaH2017} applied to $K$ and the henselian valuation $v$.
	\end{proof}
	
	%
	%
	%
	%
	%
	
	We end by showing that elimination of field quantifiers of the henselization can be deduced from strong dependence of the valued field.
	
	\begin{proposition}\label{P:henselisation}
		Let $(K,v)$ be a strongly dependent valued field. Then its henselization $(K^h,v)$ is also strongly dependent.
	\end{proposition}
	\begin{proof}
		
		First we show that $(K^h,v)$ is either a model of $T_1$ or of $T_2$.
		By \cite[Theorem 4.2.2]{johnson}, $(K,v)$ is defectless and hence $(K^h,v)$ is algebraically maximal. We break into cases:
		
		\underline{If $(\mathrm{char}(K),\mathrm{char}(Kv))=(0,0)$} then $(K^h,v)$ is a model of $T_1$.
		
		\underline{If $(\mathrm{char}(K),\mathrm{char}(Kv))=(p,p)$} then by \cite[Theorem 4.3.1]{johnson} $Kv$ is infinite. $Kv$ is perfect by strong dependence and $p$ does not divide the degree of any finite extension of $Kv$. Perfection of $K$ implies that $vK$ is $p$-divisible. Thus $(K,v)$ is Kaplansky and, since the henselization is an immediate extension, $(K^h,v)$ is algebraically maximal Kaplansky, so a model of $T_1$.
		
		\underline{Assume $(\mathrm{char}(K),\mathrm{char}(Kv))=(0,p)$}. If $Kv$ is finite then by \cite[Theorem 4.3.1]{johnson}, $[0,v(p)]$ is finite so $(K^h,v)$ is a model of $T_2$. If $Kv$ is infinite, as before, $(K^h,v)$ is a model of $T_1$.
		
		Since the henselization $(K^h,v)$ is an immediate extension, $vK^h$ and $K^hv$ are strongly dependent. By Lemma \ref{L:transfer for models of T1T2}, $(K^h,v)$ is strongly dependent.
	\end{proof}

	A similar proof gives the following:
	
	\begin{proposition}
		Let $(K,v)$ be strongly dependent such that $(K^h,v)\models T_1$. Then the inertia field $(K^t,v)$ of $(K,v)$ is strongly dependent. 
	\end{proposition}
	\begin{proof}
		The definition and basic properties of the inertia field can be found in e.g. \cite[Section 7.4]{FVK}. Since $(K^h,v)\models T_1$ also $(K^t,v)$ is henselian and defectless. Moreover, $K^t$ and $K^tv$ are perfect. Since $vK^t=vK^h$ in order to show that that $(K^t,v)\models T_1$ it remains to show that the degree of every finite extension of $K^tv$ is prime to $p$, but $K^tv$ being separably closed and perfect it is algebraically closed, so there is nothing to prove.
		
		The rest is as in Proposition \ref{P:henselisation}.	
	\end{proof}
	
	This proves Theorem \ref{T:theorem B} of the introduction.
	
	\begin{remark}
		Notice that a similar proof shows that if $(K^h,v)\models T_1$ then $(L,v)$ admits elimination of field quantifiers whenever $K^h\subseteq L\subseteq K^t$. 
	\end{remark}

	
	The following example shows that the requirement that $(K^h,v)\models T_1$ is necessary.
	
	\begin{example}
		Let $(K,v)$ be a strongly dependent field with discrete value group and finite residue field. Then, by Fact \ref{F:char-strongdep} $(K^t,v)$ is not strongly dependent, despite the fact that $vK^t=vK$ is strongly dependent and $K^tv$ is algebraically closed (being the algebraic closure of $\mathbb F_p$). Note that $(K^t,v)$ is, in addition, henselian and defectless (being an algerbaic extension of $(K^h,v)$ which is strongly dependent), so algebraically maximal, with algebraically closed residue field. 	
	\end{example}
	
	We point out the following result: 
	
	\begin{proposition}
		Let $(K,v)\equiv(L,w)$ (in the three-sorted language) be strongly dependent such that $(K^h,v), (L^h,w)\models T_1$. Then $(L^h,w)\equiv (K^h,v)$ and $(L^t,w)\equiv (K^t,v)$. 
	\end{proposition}
	\begin{proof}
		By Corollary \ref{C: completeness}, the assumption that $(K,v)\equiv(L,w)$ and the fact that the henselisation is an immediate extension, imply that $v(p)=w(p)$ for $p=\mathrm{char} (Lw)=\mathrm{char} (Kv)$, and therefore $(K^h,v)\equiv (L^h,w)$. For the intertia fields, recall that if two fields are elementary equivalent then so are their algebraic closures.
	\end{proof}
	
	%
	%

	\section{Geometric Fields}\label{s:geometric}
	In this final section, we use arguments from \cite[Theorem 5.5]{Junker2010}  for pure henselian valued fields of characteristic $0$, to show that henselian fields of finite dp-rank are geometric fields (see below for the definition). The proof in \cite{Junker2010} is duplicated almost verbatim, we give the proof for the sake of completeness.
	
	\begin{proposition}\label{P:acl-alg}
		Let $K$ be a strongly dependent field and $v$ a non-trivial henselian valuation on $K$. Then for every $K\equiv K'$ (in the language of rings) and $A\subseteq K'$, $acl_{K'}(A)=\mathbb{F}_0( A) ^{alg}\cap K'$, where $\mathbb{F}_0$ is the prime field of $K'$.
	\end{proposition}
	\begin{remark}
		A field satisfying this proposition in called \emph{very slim} in \cite{Junker2010}.
	\end{remark}
	\begin{proof}
		If $K$ is separably closed, and hence --  by perfection -- algebraically closed, this is known (and follows, essentially, from quantifier elimination in ACVF). 
		
		Otherwise, by \cite[Remark 7.11]{PrZ1978} $K'$ also admits a definable henselian topology. By \cite[Lemma 4.11]{Junker2010}, it is enough to prove the statement for an elementary extension of $K'$. We, thus, assume that $K'$ is $\aleph_1$-saturated. By \cite[Theorem 7.2]{PrZ1978}, $K'$ admits a non-trivial henselian valuation which we will also denote by $v$. If $(K',v)$ is of mixed characteristic then, as in the proof of Lemma \ref{L:transfer for models of T1T2},   by passing to a coarsening  we may assume that $(K',v)$ is a model of $T_1$. By Fact \ref{F:exist-ac} $(K',v)$ admits an angular component map. 
		
		Let $\varphi(x)$ be an $A$-definable algebraic formula and assume there exists $a$ satisfying $\varphi(x)$ which is transcendental over $\mathbb{F}_0(A)$. 
		
		\begin{claim}
			For every non constant polynomial $p(x)$ over $A$ such that $p(a)\neq 0$, there exists a formula $\psi_p(x)$ over $A$ satisfying:
			\begin{enumerate}
				\item $\psi_p$ is not algebraic.
				\item $a$ satisfies $\psi_p$ and for every $a'\models \psi_p$:
				\[rv(p(a))=rv(p(a')),\]
				where $rv:(K')^\times\to (K')^\times/(1+\mathcal{M})$ is the natural projection ($\mathcal{M}$ is the maximal ideal corresponding to the valuation $v$).
			\end{enumerate}
		\end{claim}
		\begin{claimproof}
			For any $x$ note that $rv(p(a))=rv(p(x))$ if and only if $v(p(x)-p(a))>v(p(a))$. As $p(a)\neq 0$ this is equivalent to $v(p(x)/p(a)-1)>0$. Let $\psi_p(x)$ be this latter formula.
		\end{claimproof}
		
		Since $rv(x)=rv(y)$ implies $v(x)=v(y)$ and $ac(x)=ac(y)$, by elimination of field quantifiers (Corollary \ref{C:elim-fqe-strong}), see also \cite[Theorem 5.5]{Junker2010}, and using the claim we may find a non algebraic formula $\psi(x)$ over $A$ such that for every $a'$ satisfying $\psi$, $a'$ satisfies $\phi$ as well. Contradicting the fact that $\varphi(x)$ is algebraic.
	\end{proof}
	
	\begin{corollary}\cite[Proof of Corollary 5.6]{Junker2010}
		A strongly dependent henselian field has no proper infinite ring-definable subfield.
	\end{corollary}

	In \cite[Remark 2.10]{HrP1994}, Hrushovski-Pillay define the notion of a \emph{geometric field}, it is a field $K$ satisfying:
	\begin{list}{•}{}
		\item $K$ is perfect,
		\item For every $K'\equiv K$ (in the language of rings) and $A\subseteq K'$, \[acl_{K'}(A)=\mathbb{F}_0(A)^{alg},\] where $\mathbb{F}_0$ is the prime field,
		\item Every $K'\equiv K$ eliminates $\exists^\infty$.
	\end{list}
	
	Those fields, when sufficiently saturated, enjoy a nice group configuration theorem, see \cite[Section 3]{HrP1994}.
	
	Since every strongly dependent field is perfect, combined with Proposition \ref{P:acl-alg} and elimination of $\exists^\infty$ in the finite dp-rank case (\cite[Corollary 2.2]{DoG2017}) we get:
	
	\begin{proposition}
		Every henselian field of finite dp-rank is a geometric field.
	\end{proposition}
	
	Recall the following:
	\begin{definition}
		A structure $M$ has \emph{geometric elimination of imaginaries} if every $b\in M^{eq}$ is interalgebraic with some finite tuple from $M$.
	\end{definition}
	
	\begin{proposition}
		Let $K$ be a strongly dependent field admitting a non-trivial henselian valuation $v$. If $K$ does not admit any non-trivial definable valuation then $K$ is either algebraically closed or real closed.
		
		As a result, if $K$ has geometric elimination of imaginaries, or more specifically if it is \emph{surgical} (condition $(E)$ in \cite[Definition 2.4]{HrP1994}) then it is either algebraically closed or real closed.
	\end{proposition}
	\begin{proof}
		By strong dependence $K$ is perfect, and by \cite[Proposition 5.2]{HaH2017}  the residue field $Kv$ is also perfect.
		By \cite[Proposition 5.5]{HaH2017} the value group $vK$ is divisible. By the proof of \cite[Proposition 2.4]{Jah2016}, $Kv$ is either real closed or algebraically closed (the proof shows that unless $Kv$ is real closed or separably closed there exists a non-trivial definable valuation on $K$).
		
		By \cite[Corollary 5.15]{HaH2017}, $(K,v)$ is algebraically maximal, thus if $Kv$ is algebraically closed then, since $vK$ is divisible, so is $K$.
		Otherwise, $Kv$ is real closed and thus necessarily so is $K$.
		
		As for the last statement, first note that by Proposition \ref{P:acl-alg} the (model theoretic) $acl(\cdot)$-operator on $K$ satisfies the Steinitz exchange (because it coincides with field theoretic algebraic closure in $K$) and therefore its theory is pregeoemtric (\cite[Definition 2.1]{Gag2005}). It follows, \cite[Corollary 3.6]{Gag2005}, that if $\mathrm{Th}(K)$ has geometric elimination of imaginaries it is surgical. This means, in particular, that there cannot be a $K$-definable equivalence relation on the field with infinitely many infinite classes. But had $K$ admitted a non-trivial definable valuation $u$ the formula $u(x)=u(y)$ would be such an equivalence relation. Thus, by the first part of the proposition, $K$ must be either real closed or algebraically closed. 
	\end{proof}
	
	We note that the first part of the above proposition is also true, more generally, in the strictly dependent case by  \cite[Corollary 1.3]{DuHaKu}
	
	\appendix
	
	\section{Elimination of Field Quantifiers in the RV-Language}\label{A:appendix}
	We conclude by showing that strongly dependent fields (in fact, models of either $T_1$ or $T_2$) eliminate field quantifiers in the RV-language. As the proof is, essentially, similar to what we have done, we only sketch the argument. Also, as it is already known for models of $T_2$ in a more general language than the one described below (see \cite{basarab} and \cite{amc-kuh}) we will focus on models of $T_1$. We briefly review some definitions, see also, e.g. \cite{flenner}.
	
	Let $(K,v)$ be a valued field. The group $1+\mathcal{M}_K$ of $1$-units is a subgroup of $K^\times$. Set $RV_K:=K^\times/(1+\mathcal{M}_K)$ and let \[rv_K:K^\times\to RV_K\] be the natural quotient homomorphism. We may extend this map to all of $K$ by adding a new symbol for $rv_K(0)$. Note that $(Kv)^\times$ embeds in $RV_K$ and we have the following exact sequence
	
	\[\xymatrix{
		1\ar[r] & (Kv)^\times\ar[r]^\iota & RV_K\ar[r]^{v}& vK\ar[r] & 0
	}.\]
	$RV_K$ also inherits an image of the addition from $K$ denoted by $\oplus$, see \cite{flenner} for more information. We consider $RV_K$ as a structure in the language $\{\cdot, ^{-1},\oplus,1,v\}$. The RV-structure for the valued field $K$ is a two sorted structure $(K,RV_K)$ together with the map $rv_K$. We need the following extension of Lemma \ref{L:extending value}:
	
	\begin{lemma}\label{L:extend p-root-rv}
		Let $(L_1,v)$ and $(L_2,v)$ be models of $T_1$ in the RV-language, and $K$ a common substructure with $\mathrm{char}(Kv)=p>0$ and $\tau: RV_{L_1}\hookrightarrow RV_{L_2}
		$ an embedding of the RV-sorts over $RV_K$.
		
		Assume that $vL_1/vK$ is a finitely generated $p$-group, then there exist $d_1,\dots,d_k\in L_2$ and an embedding of valued fields $K(d_1,\dots,d_k)\hookrightarrow L_2$ over $K$ such that 
		
		\begin{enumerate}
			\item $vK(d_1,\dots,d_k)=vL_1$, and
			\item the embedding of RV-structures \[(K(d_1,\dots,d_k), RV_{K(d_1,\dots,d_k)})\hookrightarrow (L_2,RV_{L_2})\] over $(K,RV_K)$ induced by the embedding of valued fields \[K(d_1,\dots,d_k)\hookrightarrow L_2,\] over $K$, lifts $\tau|_{K(d_1,\dots,d_k)}$.
		\end{enumerate}
	\end{lemma}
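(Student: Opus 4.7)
The plan is to mimic Lemma \ref{L:extending value}(Case 2.2), tracking the $rv$-map in place of the ac-map. The crucial new ingredient is that in residue characteristic $p$ the $rv$-class of a $p$-th root of a given element is uniquely determined, which makes compatibility with $\tau$ automatic.

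I would argue by induction on $|vL_1/vK|$. If $vL_1 = vK$ there is nothing to do. Otherwise the finite $p$-group $vL_1/vK$ contains an element of order $p$, so I pick $\gamma \in vL_1 \setminus vK$ with $p\gamma \in vK$. Let $\tau_\Gamma \colon vL_1 \to vL_2$ denote the embedding of value groups induced by $\tau$, and fix $c \in K$ with $v(c) = p\gamma$. Applying Proposition \ref{P:has-p-roots-and-more}(2) in $L_1$ (since $v(c)\in p\cdot vL_1$) and in $L_2$ (since $v(c)=p\tau_\Gamma(\gamma)\in p\cdot vL_2$) yields $b_1 \in L_1$ with $b_1^p=c$, $v(b_1)=\gamma$, and $d \in L_2$ with $d^p=c$, $v(d)=\tau_\Gamma(\gamma)$. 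Since $\gamma\notin vK$ implies $c\notin K^p$ (any $p$-th root of $c$ in $K$ would have value $\gamma$), the polynomial $x^p - c$ is irreducible over $K$, and $b_1 \mapsto d$ defines a field embedding $\sigma \colon K(b_1) \hookrightarrow L_2$ over $K$.

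To verify that $\sigma$ preserves the valuation, I would write each $a\in K(b_1)^\times$ uniquely as $a=\sum_{i=0}^{p-1}\alpha_i b_1^i$ with $\alpha_i\in K$ and note that the values $v(\alpha_i)+i\gamma$ lie in distinct cosets of $vK$ in $vL_1$; hence $v(a)=\min_i(v(\alpha_i)+i\gamma)$ is attained at a unique index, and the same formula with $d$ in place of $b_1$ computes $v(\sigma(a))$.

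The heart of the argument is that $\sigma$ commutes with $\tau$ on $RV$. Both $rv(d)$ and $\tau(rv(b_1))$ are $p$-th roots of $rv(c) \in RV_K$ in $RV_{L_2}$, and I claim this root is unique: if $x_1,x_2\in RV_{L_2}$ satisfy $x_1^p=x_2^p$, then $p\cdot v(x_1/x_2)=0$, and since $vL_2$ is torsion-free, $v(x_1/x_2)=0$, so $x_1/x_2\in(L_2v)^\times$; but $L_2v$ has characteristic $p$, so $(x_1/x_2)^p=1$ forces $x_1=x_2$. Thus $rv(d)=\tau(rv(b_1))$, and the unique-minimum description above yields $rv(\sigma(a))=\tau(rv(a))$ for every $a\in K(b_1)^\times$. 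The inductive step then applies the same construction with $K$ replaced by $K(b_1)$ and $\tau$ replaced by its restriction to $RV_{K(b_1)}$; since $|vL_1/vK(b_1)|=|vL_1/vK|/p$, the recursion terminates. The only nontrivial technical point is this uniqueness of $p$-th roots at the $RV$-level in residue characteristic $p$; once it is observed, the rest is direct bookkeeping.
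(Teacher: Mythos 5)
Your proposal is correct and takes essentially the same route as the paper: both arguments apply Proposition \ref{P:has-p-roots-and-more}(2) in $L_1$ and in $L_2$ to produce $p$-th roots of a common element of $K$ of value $p\gamma$, and both use torsion-freeness of the value group together with $\mathrm{char}(Kv)=p$ to force the corresponding rv-classes to match under $\tau$, before expanding a general element of $K(b_1)$ as a sum dominated by a unique monomial to get compatibility on all of $RV_{K(b_1)}$. The only cosmetic difference is that you package the matching step as uniqueness of $p$-th roots in $RV_{L_2}$ and verify the valuation extension by hand, whereas the paper introduces an auxiliary element $e$ realizing $\tau(rv_{L_1}(d))$, runs the same residue computation, and cites \cite[Lemma 6.40]{FVK} for the uniqueness of the valued-field extension $K(d)\cong K(t)$.
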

	\begin{proof}
		By induction, we may assume that $vL_1/vK$ is generated by one element, e.g. $vL_1=vK+\mathbb{Z}\alpha$ with $p\alpha\in vK$, $\alpha\notin vK$. By Proposition \ref{P:has-p-roots-and-more}(2) for $T_1$, there exists $d\in L_1$ satisfying $d^p\in K$ and $v(d^p)=p\alpha$. By \cite[Lemma 6.40]{FVK}, $v$ extends uniquely to $K(d)$ and \[vK(d)=vK+\mathbb{Z}\alpha \text{ and } K(d)v=Kv.\]

		Since $rv_K(d^p)=y^p$, has a solution in $RV_{L_1}$ (e.g. $y=rv_{L_1}(d)$), by the embedding $\tau$, there exists $e\in L_2$ such that \[rv_K(d^p)=rv_{L_2}(e^p).\] The problem is that we do not know that $e^p\in K$, but notice that $v(e^p)=v(d^p)\in pvL_2$. Hence, again, by Proposition \ref{P:has-p-roots-and-more}(2) for $T_1$, there exists $t\in L_2$ such that $t^p=d^p$.
		
		By uniqueness in \cite[Lemma 6.40]{FVK}, $K(d)\subseteq L_1$ and $K(t)\subseteq L_2$ are isomorphic as valued fields over $K$. It remains to show that this isomorphism is a lifting of $\tau|_{K(d)}$.
		
		\begin{claim}
			$\tau (rv_{L_1}(d))=rv_{L_2}(t)$
		\end{claim}
		\begin{claimproof}
			By the way we chose $e$, $\tau (rv_{L_1}(d))=rv_{L_2}(e)$ and thus also $v(d^p)=v(e^p)$ and $res(e^p/d^p)=1$ (\cite[Proposition 1.3.3]{flenner}). Since $v(e^p)=v(t^p)$, $v(e)=v(t)$, and since $res(e^p/t^p)=res(e^p/d^p)=1$ we have $res(e/t)^p=1$. But $\mathrm{char}(Kv)=p>0$ so $res(e/t)=1$. Again, by \cite[Proposition 1.3.3]{flenner}, $rv_{L_2}(e)=rv_{L_2}(t)$ and we are finished. 
			
		\end{claimproof}
		
		Let $x\in K(d)$.  By the above there exist $b\in K$ such that \[v(x^{-1}bd^n)=0.\] It follows from this and $K(d)v=Kv$ that there exists also $c\in K^\times$ with $v(c)=0$ such that $v(x^{-1}bd^nc)=0$ and $res(x^{-1}bd^nc)=1$. Hence, see for instance \cite[Proposition 1.3.3]{flenner},
		\[rv_{K(d)}(x)=rv_K(b)\cdot rv_{K(d)}(d)^n\cdot rv_K(c).\]
	\end{proof}
	
	\begin{theorem}
		Let $L$ and $F$ be models of $T_1$ in the RV-language, with $F$ $|L|^+$-saturated, and $K$ a common substructure. Then for every embedding $\tau:RV_L\hookrightarrow RV_F$ over $RV_K$, there is an embedding of $L$ in $F$ over $K$ inducing $\tau$.
	\end{theorem}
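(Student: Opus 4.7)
The plan is to mimic the proof of Theorem \ref{P:type-A}, replacing the separate embeddings of $vL$ and $Lv$ by the single embedding $\tau:RV_L\hookrightarrow RV_F$, and using RV-language analogues of Lemmas \ref{L:extending value} and \ref{L:extending residue}. The key preliminary observation is that any valued field embedding $\phi:(L_1,v)\hookrightarrow (L_2,v)$ automatically induces an embedding $RV_{L_1}\hookrightarrow RV_{L_2}$, since $\phi$ preserves the maximal ideal and hence $1+\mathcal{M}$. The real task at each step will therefore not be to produce an rv-compatible map but to arrange that the induced map agrees with the prescribed $\tau$.

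After reducing to the case where $L/K$ is finitely generated and extending to the henselization $K^h\subseteq L$ (which is immediate, hence poses no RV-compatibility issue), I would choose a valuation transcendence basis $\mathcal{T}=\{x_i,y_j:i\in I,j\in J\}$ for $L/K$. For each $x_i$, I would select $b_i\in F$ with $rv_F(b_i)=\tau(rv_L(x_i))$ and use an RV-analogue of Lemma \ref{L:extending value}(Case 1) to extend the embedding to $K(x_i)$. Similarly, for each $y_j$ I would pick $a_j\in F$ whose residue matches $\tau(rv_L(y_j))$ (which lies in the image of $(Kv)^\times\hookrightarrow RV_K$) and apply an RV-analogue of Lemma \ref{L:extending residue}(Case 1). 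After passing to the henselization of $K(\mathcal{T})$ in $F$, I would imitate Lemma \ref{L:emb-for-tame} on the maximal tame algebraic extension $L'$ of $K(\mathcal{T})^h$ in $L$: adjoin a primitive element for the simple separable residue extension via Hensel's lemma, then prime-to-$p$ roots for the value group extension, in each case picking the representative in $F$ dictated by $\tau$.

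At this point $vL/vL'$ will be a $p$-group and $Lv/L'v$ a purely inseparable algebraic extension. I would apply Lemma \ref{L:extend p-root-rv} repeatedly to extend the embedding to some $L''\subseteq L$ with $vL''=vL$ while lifting $\tau|_{RV_{L''}}$, then invoke Lemma \ref{L:adap-johnson} to pass to the relative algebraic closure $L'''$ of $L''$ in $L$; since that extension preserves the value group, the induced rv-map is forced and equals $\tau|_{RV_{L'''}}$. Finally, by Proposition \ref{P:has-p-roots-and-more}(3), $L'''$ is a model of $T_1$, hence algebraically maximal, so $L/L'''$ is immediate, and the standard pseudo-Cauchy sequence argument via Kaplansky's theorems combined with $|L|^+$-pc-completeness of $F$ will complete the embedding. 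The remaining immediate steps automatically respect rv.

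The main obstacle will be the tame algebraic step, where Hensel's lemma must produce a root whose rv-class matches $\tau$ and not merely some lift of the prescribed residue. Via the exact sequence $1\to (Kv)^\times\to RV_K\to vK\to 0$ this reduces to checking that any two Hensel-lifts sharing a residue (respectively, any two $q$-th roots of a fixed element for $q$ prime to $p$) differ by an element of $1+\mathcal{M}$, which is automatic; Lemma \ref{L:extend p-root-rv} is the template for this type of book-keeping, and the tame cases are analogous but easier, lacking wild residue interaction.
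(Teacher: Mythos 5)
Your overall skeleton is the one the paper uses: reduce to a finitely generated extension, take the maximal tame algebraic extension $L'$ of $K(\mathcal{T})^h$ in $L$, handle the wild value-group part with Lemma \ref{L:extend p-root-rv}, pass to the relative algebraic closure via Lemma \ref{L:adap-johnson} (where the induced rv-map is indeed forced, since the residue extension is purely inseparable and $p$-power roots of value-zero elements of $RV_F$ are unique), and finish the immediate extension by Kaplansky's pseudo-Cauchy argument. The one place you genuinely diverge is the tame step: the paper does not redo Lemmas \ref{L:extending value}--\ref{L:emb-for-tame} in the RV-setting but simply cites Kuhlmann's embedding lemma \cite[Lemma 3.7]{amc-kuh}, which is stated directly for tame algebraic extensions relative to amc-structures (i.e.\ RV-structures) and already produces an embedding of $L'$ inducing $\tau$. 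Your plan to re-derive this by hand is legitimate and self-contained, at the cost of repeating work that is available in the literature.

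However, the justification you offer for the delicate point of that by-hand tame step is wrong as stated: two $q$-th roots of a fixed element, with $q$ prime to the residue characteristic, do \emph{not} differ by an element of $1+\mathcal{M}$; they differ by a $q$-th root of unity, and a nontrivial such root of unity has nontrivial residue (by Hensel's lemma applied to $x^q-1$), hence a different rv-class. So rv-matching in the tame case is not automatic; you must actively select the root prescribed by $\tau$. The correct argument: if $d_i\in L'$ with $d_i^q=d$ in the field already embedded via $\sigma$, pick any $e\in F$ with $rv_F(e)=\tau(rv_L(d_i))$; then $rv_F(e^q)=\tau(rv_L(d))=rv_F(\sigma(d))$, so $\sigma(d)/e^q\in 1+\mathcal{M}_F$, and since $q$ is prime to $\mathrm{char}(Fv)$ Hensel's lemma gives $w\in 1+\mathcal{M}_F$ with $w^q=\sigma(d)/e^q$; then $b=ew$ is a $q$-th root of $\sigma(d)$ with $rv_F(b)=\tau(rv_L(d_i))$. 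Note that the situation is the reverse of what you describe: in the wild case of Lemma \ref{L:extend p-root-rv} the matching \emph{is} automatic, because $p$-th roots of unity lie in $1+\mathcal{M}$ (their residues are $p$-th roots of unity in characteristic $p$, hence trivial), whereas the tame case requires the choice just described. With that correction (or by citing \cite[Lemma 3.7]{amc-kuh} as the paper does), your proof goes through.
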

	\begin{proof}
		As in the proof of Theorem \ref{P:type-A}, let $L'$ be the maximal tame algebraic extension of $K(\mathcal{T})^h$, where $\mathcal{T}$ is as in the proof. By \cite[Lemma 3.7]{amc-kuh}, we may embed $L'$ in $F$ over $K$, this embedding induces $\tau$. Necessarily $vL/vL'$ is a $p$-group and $Lv/L'v$ is a purely inseparable algebraic extension.
		
		Using Lemma \ref{L:extend p-root-rv}, we may extend the embedding to an intermediary field $L'\subseteq L''\subseteq L$ with $vL''=vL$ and $Lv/L''v$ a purely inseparable algebraic extension. The rest of the proof is as in Theorem \ref{P:type-A}.
	\end{proof}
	
	\begin{corollary}
		$T_1$ and $T_2$, and hence any strongly dependent henselian valued fields, eliminate field quantifiers in the RV-language (for $T_2$ you need the generalized RV-language).
	\end{corollary}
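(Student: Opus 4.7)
The plan is to deduce elimination of field quantifiers from the embedding theorem just proved, following exactly the template of Corollary \ref{C:field-elim-T1T2}. Specifically, for $T_1$ I would argue as follows. To show elimination of field quantifiers in the RV-language, it suffices to prove that whenever $L$ and $F$ are models of $T_1$ and $K$ is a common substructure, then $RV_L \equiv_{RV_K} RV_F$ implies $L \equiv_K F$. By passing to suitably saturated elementary extensions $L^*$ and $F^*$ (which does not affect the conclusion, since $L^* \equiv_K F^*$ implies $L \equiv_K F$), I can assume there is an isomorphism $\tau : RV_{L} \cong_{RV_K} RV_{F}$ and that $F$ is $|L|^+$-saturated. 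The preceding embedding theorem then provides an embedding of $L$ into $F$ over $K$ inducing $\tau$, and by symmetry (after further saturating) a back-and-forth argument yields the desired elementary equivalence.

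For $T_2$, as indicated in the introduction to the appendix, elimination of field quantifiers in the (generalized) RV-language is already established by the work of Basarab and Kuhlmann cited there, so I would simply quote those results rather than reprove them.

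Finally, for an arbitrary strongly dependent henselian valued field $(K,v)$, I would invoke Lemma \ref{L: QEsNIP} to conclude that $(K,v) \models T_1$ or $(K,v) \models T_2$, and then apply whichever of the two preceding cases is relevant (using the generalized RV-language in the $T_2$ case). Since the strongly dependent statement reduces immediately to the two theories, no further work is required.

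The only subtle point — and really the only place where anything substantive happens — is the reduction to the embedding theorem; everything else is a standard model-theoretic packaging. The one thing to double-check is that the notion of ``substructure in the RV-language'' in the corollary matches the hypothesis of the embedding theorem (so that $K$ carries its own $RV_K$ compatibly embedded in both $RV_L$ and $RV_F$), but this is built into the two-sorted formalism and presents no genuine difficulty.
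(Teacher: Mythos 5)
Your proposal is correct and follows essentially the paper's own (implicit) route: the corollary is deduced from the appendix embedding theorem exactly as Corollary \ref{C:field-elim-T1T2} is deduced from Theorem \ref{P:type-A}, with the $T_2$ case delegated to the cited results of Basarab and Kuhlmann, and the strongly dependent case reduced via Lemma \ref{L: QEsNIP}. No gaps to report.
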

	
	\bibliographystyle{plain}
	\bibliography{kaplansky2}
	
\end{document}